\documentclass[reqno,10pt]{amsart}
\usepackage{amsmath,amssymb,amsthm,mathrsfs}
\usepackage{longtable}
\usepackage{enumerate}
\usepackage{graphicx}
\usepackage{subcaption}
\usepackage{wasysym}
\usepackage{upgreek}
\usepackage[dvipsnames]{xcolor}
\usepackage{cancel}
\usepackage{soul}
\usepackage{mathtools}
\usepackage{comment}
\usepackage{tikzit,tikz}
\usetikzlibrary{arrows,automata,positioning}

\tikzstyle{burbuja}=[fill=white, draw=black, shape=circle]

\tikzstyle{flecha}=[->,>=stealth,
thick,
shorten <=2pt,
shorten >=2pt,]

\usetikzlibrary{arrows}

\allowdisplaybreaks

\theoremstyle{definition}
\newtheorem{theorem}{Theorem}[section]
\newtheorem{lemma}[theorem]{Lemma}
\newtheorem{prop}[theorem]{Proposition}
\newtheorem{corollary}[theorem]{Corollary}
\newtheorem{definition}[theorem]{Definition}

\usepackage{color}

\title{Ladder operators for generalized Zernike or disk polynomials}
\author{Misael E. Marriaga}
\date{\today}

\address[M. E. Marriaga]{Departamento de Matem\'atica Aplicada, Ciencia e Ingenier\'ia de Materiales y
	Tecnolog\'ia Electr\'onica, Universidad Rey Juan Carlos (Spain)}
\email{misael.marriaga@urjc.es}

\thanks{MEM has been supported by the research project [PID2021- 122154NB-I00], \emph{Ortogonalidad y Aproximación con Aplicaciones en Machine Learning y Teoría de la Probabilidad}  funded  by MICIU/AEI/10.13039/501100011033 and by ``ERDF A Way of making Europe”. The author has also been supported by the Comunidad de Madrid multiannual agreement with the Universidad Rey Juan Carlos under the grant Proyectos I+D para Jóvenes Doctores, Ref. M2731, project NETA-MM}

\begin{document}
	
	\begin{abstract}
		The aim of this work is to report on several ladder operators for generalized Zernike polynomials which are orthogonal polynomials on the unit disk $\mathbf{D}\,=\,\{(x,y)\in \mathbb{R}^2: \; x^2+y^2\leqslant 1\}$ with respect to the weight function $W_{\mu}(x,y)\,=\,(1-x^2-y^2)^{\mu}$ where $\mu>-1$. These polynomials can be expressed in terms of the univariate Jacobi polynomials and, thus, we start by deducing several ladder operators for the Jacobi polynomials. Due to the symmetry of the disk and the weight function $W_{\mu}$, it turns out that it is more convenient to use complex variables $z\,=\, x+iy$ and $\bar{z}\,=\,x-iy$. Indeed, this allows us to systematically use the univariate ladder operators to deduce analogous ones for the complex generalized Zernike polynomials. Some of these univariate and bivariate ladder operators already appear in the literature. However, to the best of our knowledge, the proofs presented here are new. Lastly, we illustrate the use of ladder operators in the study of the orthogonal structure of some Sobolev spaces.
	\end{abstract}

\maketitle

\section{Introduction}

The so-called Zernike polynomials, originally introduced by Frits Zernike in 1934 (\cite{Ze34}) to describe the diffraction of the wavefront in the phase contrast image microscope, are used to characterize higher-order errors observed in interferometric analysis in precision optical manufacturing to achieve the desired performance of the system. In ophthalmological practice, the Hartmann-Shack sensor (or wavefront sensor) is used to determine the refractive errors of the human optical system, measuring slopes or normals of the wavefront at different points starting from the displacement of some luminous points in a target. A systematic method of classifying forms of aberration is to express the corresponding function in an appropriate basis. Zernike polynomials are recognized as the standard basis of wavefront developments by the Optical Society of America, (OSA). Any sufficiently regular phase function defined on the unit disk can be represented by its Fourier expansion in terms of the Zernike polynomials with certain coefficients. The alteration of these coefficients allows detection of the possible aberrations of the studied optical system. They are also used to describe the aberrations of the cornea or lens from an ideal spherical shape in optometry and ophthalmology. Finally, they can be effectively used in adaptive optics to cancel
atmospheric distortion, allowing images to be improved in IR or visual astronomy and satellite
images.

In \cite{T08}, an application in quantum optics of the “generalized Zernike or disk polynomials” (see, for instance, \cite{W05}) is shown, resorting to the Lie algebra based investigation of the dynamics of quantum systems driven by two-mode interaction Hamiltonians. This leads to \textit{su}(2) and
\textit{su}(1,1) Lie algebraic structures, whose generators, in fact, can appropriately be realized in terms of creation and annihilation operators through raising and lowering operators. Hence, studying raising and lowering operators (collectively known as ladder operators), which is one the main goals of this paper, is of much interest in the theory of quantum optics.

From our point of view, Zernike polynomials are polynomials in two variables which are orthogonal on the unit disk with respect to the Lebesgue measure. They are represented in polar coordinates as a product of a radial polynomial part times a trigonometric function. The even polynomials are multiples of the cosine, and the odd polynomials are multiples of the sine. Generalized Zernike polynomials are bivariate polynomials which are orthogonal in the unit disk $x^2+y^2\leqslant 1$ with respect to the weight function $(1-x^2-y^2)^{\mu}$, where $\mu>-1$ is a parameter and the special case $\mu=0$ is equivalent to the usual Zernike polynomials. In turn, these polynomials can be generalized to several variables represented as a product of univariate Jacobi polynomials shifted to the interval $[0,1]$ and harmonic polynomials (see \cite{DX14}).

Another way of studying the generalized Zernike polynomials is to express them in one complex variable, for which we identify $\mathbb{R}^2$ with $\mathbb{C}$ by setting $z=x+iy$ and consider the unit disk as a subset of $\mathbb{C}$. Expressing orthogonal polynomials in a complex variable can be more convenient, even essential, in some cases, and may result in more elegant formulas and relations as well as simpler proofs. It is worth noting that the use of complex variables instead of cartesian ones to achieve results connected with orthogonal polynomials is discussed in \cite{Xu15}.

In this paper, we use the complex version of the generalized Zernike polynomials to show that they satisfy several ladder operators. These polynomials are represented in terms of univariate Jacobi polynomials and powers of the complex variable. We take advantage of this fact to deduce the ladder operators presented in the sequel and, thus, we also study several ladder operators satisfied by the Jacobi polynomials. We also include a brief discussion about the relevance of ladder operators in the study of orthogonal polynomials associated with Sobolev inner products, which are inner products involving the derivatives of the polynomials.

The outline of this paper is as follows. In Section \ref{preliminaries}, we provide the basic background on bivariate orthogonal polynomials and the generalized Zernike polynomials. Section \ref{ladderJacobi} is concerned with the ladder operators for Jacobi polynomials. We note that some of these univariate ladder operators already appear in the literature. However, to the best of our knowledge, the proofs presented here are new. We collect the ladder operators for the generalized Zernike polynomials in Section \ref{ladderZernike} and, in Section \ref{recurrence}, we show that fundamental differential and recurrence relations for the generalized Zernike polynomials can be deduced from the ladder operators. We dedicate the last section to discuss how ladder operators can be used to study the orthogonal polynomials associated with two Sobolev inner products defined on the unit disk. These Sobolev orthognal polynomials have been studied previously without involving bivariate ladder operators. It turns out that our proofs are simpler, which justifies this brief sidenote.

\section{Generalized Zernike or disk polynomials}\label{preliminaries}

In this section, we recall the basic facts about bivariate orthogonal polynomials and the generalized Zernike polynomials that we will need in the sequel. Our main reference is \cite{DX14}.

\subsection{Bivariate orthogonal polynomials.}
We denote by $\Pi^2$ the linear space of real bivariate polynomials. For $n\geqslant 0$, let $\Pi^2_n$ denote the linear space of real bivariate polynomials of total degree at most $n$. Evidentely,
$$
\text{dim}\, \Pi_n^2\,=\,\binom{n+2}{2} \quad \text{and} \quad \Pi^2 \,=\,\bigcup_{n\geqslant 0} \Pi_n^2.
$$
We say that a sequence $\mathcal{P}\,=\,\{P_{n,m}(x,y):\,n\geqslant 0,\, 0\leqslant m \leqslant n\}$ of polynomials in $\Pi^2$ is a \textit{polynomial system (PS)} if for all $n\geqslant 0$, the set $\mathcal{P}_n\,=\{P_{n,m}(x,y): 0\leqslant m \leqslant n\}$ consists of $n+1$ linearly independent polynomials of total degree $n$, that is, $\deg P_{n,m}\,=\,n$, $0\leqslant m \leqslant n$. In this way, a PS $\mathcal{P}$ is a basis of $\Pi^2$.

Let $\langle \cdot, \cdot \rangle:\Pi^2\times \Pi^2\rightarrow \mathbb{R}$ be an inner product defined on polynomials. A polynomial $P$ of degree $n$ is called an \textit{orthogonal polynomial} with respect to the inner product if
$$
\langle P, Q \rangle \,=\,0, \quad \forall Q\in \Pi^2_{n-1}.
$$

Given an inner product $\langle \cdot, \cdot\rangle$ and a PS $\mathcal{P}\,=\,\{P_{n,m}(x,y):\,n\geqslant 0,\, 0\leqslant m \leqslant n\}$, we will say that $\mathcal{P}$ is \textit{orthogonal with respect to the inner product} if 
$$
\langle P_{n,m}, Q\rangle \,=\,0, \quad \forall Q\in \Pi^2_{n-1},
$$
for all $n\geqslant 0$ and $0\leqslant m \leqslant n$. Moreover, if 
$$
\langle P_{n,m}, P_{i,j}\rangle \,=\,H_{n,m}\,\delta_{n,i}\,\delta_{m,j}, 
$$
where $H_{n,m}\,\ne \,0$ for $n\geqslant 0$, then we say that $\mathcal{P}$ is a \textit{mutually orthogonal polynomial system}. Here, $\delta_{n,k}$ denotes the Kronecker delta.

Let $W(x,y)$ be a weight function defined on a domain $\Omega\subseteq \mathbb{R}^2$. That is, $W(x,y)>0$ for all $(x,y)\in \Omega$, and $\Omega$ has a nonempty interior. If an inner product is given by
$$
\langle P, Q\rangle \,=\, \int \int_{\Omega} P(x,y)\,Q(x,y)\,W(x,y)\,dxdy, \quad \forall P,Q\in \Pi^2,
$$
we say that the orthogonal polynomials, whenever they exist, are orthogonal with respect to the weight function $W$.

\subsection{Generalized Zernike polynomials}

Let 
$$
\mathbf{D}\,=\,\{(x,y)\in \mathbb{R}^2: \; x^2+y^2\leqslant 1\}
$$
denote the unit disk in $\mathbb{R}^2$. For $\mu>-1$, we define the weight function
$$
W_{\mu}(x,y)\,=\,(1-x^2-y^2)^{\mu}, \quad (x,y)\in \mathbf{D},
$$
and the inner product
$$
\langle P,Q \rangle_{\mu}\,=\,b_{\mu}\,\int\int_{\mathbf{D}}P(x,y)\,Q(x,y)\,W_{\mu}(x,y)\,dxdy,
$$
where
$$
b_{\mu}\,=\,\left(\int\int_{\mathbf{D}}W_{\mu}(x,y)\,dxdy \right)^{-1}\,=\,\frac{\mu+1}{\pi}.
$$

A mutually orthogonal polynomial system on the unit disk can be given in terms of the Jacobi polynomials in the polar coordinates $(x,y)\,=\,(r\cos\theta,r\sin\theta)$, $0\leqslant r\leqslant 1$ and $0\leqslant \theta\leqslant 2\pi$. Recall that the Jacobi polynomial of degree $n$ is given by
$$
P_n^{(\alpha,\beta)}(t)=\frac{1}{n!}\displaystyle\sum_{k=0}^n\binom{n}{k}(k+\alpha+1)_{n-k}(n+\alpha+\beta+1)_k\left(\dfrac{t-1}{2} \right)^k, 
$$
where, as usual,
$$
(a)_{0}=1, \quad (a)_{k} = a\,(a+1)\cdots(a+k-1), \quad k\geqslant 1,
$$
denotes the Pochhammer symbol. These polynomials are orthogonal with respect to the univariate weight function
$$
w_{\alpha,\beta}(t)\,=\,(1-t)^{\alpha}(1+t)^{\beta}, \quad \alpha,\beta>-1, \quad t\in[-1,1],
$$
and satisfy the differential equation
\begin{equation}\label{diffeq}
	L^{\alpha,\beta}[y]\equiv (1-t^2)y''+(\beta-\alpha-(\alpha+\beta+2)\,t)\,y'\,=\,-n\,(n+\alpha+\beta+1)\,y.
\end{equation}

\begin{prop}[\cite{DX14}]
	For $n\geqslant 0$ and $0\leqslant j \leqslant \frac{n}{2}$, define
\begin{equation}\label{zernikereal}
	\begin{aligned}
		&P^{n,\mu}_{j,1}(x,y)\,=\,P_j^{(\mu,n-2j)}(2r^2-1)\,r^{n-2j}\,\cos (n-2j)\theta,\\
		&P^{n,\mu}_{j,2}(x,y)\,=\,P_j^{(\mu,n-2j)}(2r^2-1)\,r^{n-2j}\,\sin (n-2j)\theta.
	\end{aligned}
\end{equation}
Then $\mathcal{P}^{\mu}\,=\,\{P^{n,\mu}_{j,1}(x,y),\,n\geqslant 0,\,0\leqslant j \leqslant \frac{n}{2}\}\cup \{P^{\mu}_{j,2}(x,y),\,n\geqslant 0,\,1\leqslant j \leqslant \frac{n}{2}\}$ constitutes a mutually orthogonal polynomial system with respect to $W_{\mu}(x,y)$.

Moreover,
$$
\langle P^{n,\mu}_{j,\nu},P^{m,\mu}_{k,\eta} \rangle_{\mu}\,=\,H^{n,\mu}_j\,\delta_{n,m}\,\delta_{j,k}\,\delta_{\nu,\eta},
$$
where
$$
H^{n,\mu}_j\,=\,\frac{(\mu+1)_j\,(n-j)!\,(n-j+\mu+1)}{j!\,(n+2)_{n-j}\,(n+\mu+1)}\left\{\begin{array}{ll}\times 2, & n\ne 2j,\\
\times 1, & n=2j. \end{array}\right.
$$
\end{prop}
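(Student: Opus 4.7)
The plan is to pass to polar coordinates $x = r\cos\theta$, $y = r\sin\theta$, with Jacobian $r\,dr\,d\theta$ and domain $[0,1]\times[0,2\pi)$. The weight becomes $(1-r^2)^\mu$, and the inner product $\langle P^{n,\mu}_{j,\nu}, P^{m,\mu}_{k,\eta}\rangle_\mu$ factorizes as a product of an angular integral in $\theta$ and a radial integral in $r$.

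For the angular integral, the elementary orthogonality of $\{1,\cos k\theta,\sin k\theta\}$ on $[0,2\pi]$ kills all cosine-sine cross terms (disposing of the $\nu\ne\eta$ case), while the cosine-cosine and sine-sine integrals vanish unless the frequencies agree. Because $n-2j\geqslant 0$ and $m-2k\geqslant 0$, this forces $n-2j=m-2k$. Writing $\ell=n-2j$, the angular factor equals $\pi$ when $\ell>0$ and $2\pi$ when $\ell=0$ in the cosine case, and equals $\pi$ when $\ell>0$ and $0$ when $\ell=0$ in the sine case. This last observation is precisely why the index range for the $\nu=2$ family excludes $n=2j$, and it is the source of the dichotomy in the stated $H^{n,\mu}_j$.

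With $\ell=n-2j=m-2k$, the radial factor is
\[
\int_0^1 P_j^{(\mu,\ell)}(2r^2-1)\,P_k^{(\mu,\ell)}(2r^2-1)\,(1-r^2)^\mu\,r^{2\ell+1}\,dr,
\]
and the substitution $t=2r^2-1$, which gives $dt=4r\,dr$, $1-r^2=(1-t)/2$ and $r^{2\ell}=((1+t)/2)^\ell$, turns this into
\[
\frac{1}{2^{\mu+\ell+2}}\int_{-1}^{1} P_j^{(\mu,\ell)}(t)\,P_k^{(\mu,\ell)}(t)\,(1-t)^\mu(1+t)^\ell\,dt.
\]
This is precisely the Jacobi inner product with parameters $(\mu,\ell)$; by the classical orthogonality of $\{P_j^{(\mu,\ell)}\}$, it vanishes unless $j=k$, and combined with the relation $n-2j=m-2k$ this forces $n=m$, which establishes the mutual orthogonality claim.

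Finally, for the diagonal case $n=m$, $j=k$, I would insert the classical formula
\[
\int_{-1}^{1}\!\bigl(P_j^{(\alpha,\beta)}(t)\bigr)^{2}(1-t)^\alpha(1+t)^\beta\,dt \,=\,\frac{2^{\alpha+\beta+1}}{2j+\alpha+\beta+1}\,\frac{\Gamma(j+\alpha+1)\,\Gamma(j+\beta+1)}{j!\,\Gamma(j+\alpha+\beta+1)}
\]
with $\alpha=\mu$, $\beta=\ell=n-2j$, multiply by $b_\mu=(\mu+1)/\pi$ and by the appropriate angular factor ($\pi$ or $2\pi$), and simplify using $2j+\mu+\ell+1=n+\mu+1$ and the Pochhammer identity $(a)_k=\Gamma(a+k)/\Gamma(a)$. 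The main obstacle is not conceptual but notational: it is the bookkeeping required to recast the resulting quotient of Gamma functions into the compact Pochhammer expression displayed in the statement, and to verify that the factor-of-two dichotomy there matches the $\pi$ versus $2\pi$ alternative coming from the angular integral.
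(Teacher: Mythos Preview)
Your argument is correct and is the standard proof of this result. Note, however, that the paper does not actually prove this proposition: it is stated with a citation to \cite{DX14} and no proof is given, so there is nothing in the paper to compare your approach against. Your polar-coordinate factorization into an angular trigonometric orthogonality and a radial Jacobi orthogonality via $t=2r^2-1$ is exactly the argument one finds in the cited reference.
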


\subsection{Complex generalized Zernike polynomials}

Due to the symmetry of the unit disk, expressing the generalized Zernike polynomials in complex variables can be more convinient for our study and results in more elegant formulas and relations. 

For $z\in \mathbb{C}$ we write $z=x+iy$ and, hence, considering the unit disk as a subset of $\mathbb{C}$, we have
$$
\mathbf{D}\,=\,\{z\in \mathbb{C}:\,z\bar{z}\leqslant 1\}.
$$

\begin{definition}
	For $k,j\geqslant 0$, we define
	\begin{equation}\label{zernikecomplex}
			Q_{k,j}^{\mu}(z,\bar{z})\,=\,\frac{j!}{(\mu+1)_j}\,z^{k-j}\,P_j^{(\mu,k-j)}(2z\bar{z}-1), \quad k>j.
	\end{equation}
\end{definition}

The polynomials \eqref{zernikecomplex} are normalized by $Q^{\mu}_{k,j}(1,1)\,=\,1$. We note that we can write (\cite{VK1993})
\begin{equation}\label{conjugation}
	Q^{\mu}_{k,j}(z,\bar{z})\,=\,\frac{k!}{(\mu+1)_k}\,\bar{z}^{j-k}\,P_k^{(\mu,j-k)}(2z\bar{z}-1).
\end{equation}
They constitute a mutually orthogonal system with respect to the weight function
$$
w_{\mu}(z)\,=\,(1-z\bar{z})^{\mu}, \quad \mu>-1,\quad z\in \mathbf{D},
$$
which satisfies $w_\mu(z)\,=\,W_{\mu}(x,y)$, hence, $w_{\mu}(z)\,dz\,=\,W_{\mu}(x,y)\,dxdy$. More precisely, the orthogonality is given by
\begin{equation}\label{orthogonality}
b_{\mu}\,\int_{\mathbf{D}}Q^{\mu}_{k,j}(z,\bar{z})\,\overline{Q^{\mu}_{m,\ell}(z,\bar{z})}\,w_{\mu}(z)\,dz\,=\,h^{\mu}_{k,j}\,\delta_{k,m}\,\delta_{j,\ell},
\end{equation}
where
\begin{equation}\label{norm}
h^{\mu}_{k,j}\,=\,\frac{\mu+1}{\mu+k+j+1}\frac{k!\,j!}{(\mu+1)_k\,(\mu+1)_j}.
\end{equation}
By \eqref{zernikecomplex}, 
\begin{equation*}
	\begin{aligned}
		\text{Re}\{Q^{\mu}_{n-j,j}(z,\bar{z})\}&\,=\,\frac{j!}{(\mu+1)_j}\,P^{n,\mu}_{j,1}(x,y), & 0\leqslant j \leqslant \frac{n}{2}, \\[10pt]
		\text{Im}\{Q^{\mu}_{n-j,j}(z,\bar{z})\}&\,=\,\frac{j!}{(\mu+1)_j}\,P^{n,\mu}_{j,2}(x,y), & 0\leqslant j \leqslant \frac{n}{2},
	\end{aligned}
\end{equation*}
which are, up to a constant, the orthogonal polynomials of two real variables defined in \eqref{zernikereal}.

We must note that the polynomials $Q_{k,j}^{\mu}(z,\bar{z})$ are invariant under the simultaneous permutations of the variables $\{z,\bar{z}\}$ and the subindices $\{k,j\}$, that is,
\begin{equation}\label{invariance}
Q_{k,j}^{\mu}(z,\bar{z})\,=\,Q_{j,k}^{\mu}(\bar{z},z).
\end{equation}
This invariance will permeate the results presented in the sequel.

\section{Ladder operators for Jacobi polynomials}\label{ladderJacobi}

Throughout this section, we will introduce pairs of related ladder operators for the univariate Jacobi polynomials and, in the next section, we use them to deduce ladder operators for the complex generalized Zernike polynomials. Some of these univariate and bivariate ladder operators already appear in the literature. However, to the best of our knowledge, the proofs presented here are new.

Define the operators
$$
A_1[u]\,=\,\frac{du}{dt} \quad \text{and} \quad A_2[u]\,=\,(1-t^2)\frac{du}{dt}+[\beta\,(1-t)-\alpha\,(1+t)]\,u.
$$
The action of these operators over the Jacobi polynomials is well known (\cite[18.9.15, 18.9.16]{NIST2010}). The classical proof for the following result is based on the orthogonality of the Jacobi polynomials. However, we provide an operational proof.

\begin{prop}
	The univariate Jacobi polynomials satisfy
	\begin{equation}\label{ladder1}
		\begin{aligned}
			A_1[P_n^{(\alpha,\beta)}(t)]&\,=\,\frac{1}{2}(n+\alpha+\beta+1)\,P_{n-1}^{(\alpha+1,\beta+1)}(t), & \alpha,\beta>-1,\\[10pt]
			A_2[P_n^{(\alpha,\beta)}(t)]&\,=\,-2(n+1)\,P_{n+1}^{(\alpha-1,\beta-1)}(t), & \alpha,\beta>0.
		\end{aligned}
	\end{equation}
\end{prop}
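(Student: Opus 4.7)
The plan is to prove the first identity by a direct computation with the explicit series defining $P_n^{(\alpha,\beta)}$, and then deduce the second identity operationally from the first by factorising the Jacobi differential operator $L^{\alpha,\beta}$.

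For $A_1$, I would differentiate the explicit expansion of $P_n^{(\alpha,\beta)}(t)$ term-by-term in the basis $\{((t-1)/2)^k\}_{k\geqslant 0}$, noting that the factor $\tfrac{1}{2}$ from the chain rule and the factor $k$ combine with $\binom{n}{k}$. Reindexing with $j=k-1$ yields a series of the same form as the expansion of $P_{n-1}^{(\alpha+1,\beta+1)}(t)$. Matching coefficients of $((t-1)/2)^j$ reduces, after the elementary simplifications $\binom{n}{j+1}(j+1)=n\binom{n-1}{j}$ and $(n+\alpha+\beta+1)(n+\alpha+\beta+1)_j=(n+\alpha+\beta+1)_{j+1}$, to the claimed identity. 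No restriction beyond $\alpha,\beta>-1$ is ever used.

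For $A_2$, the key observation is the operator identity
\begin{equation*}
A_2^{(\alpha+1,\beta+1)}\circ A_1 \;=\; L^{\alpha,\beta},
\end{equation*}
where $A_2^{(\alpha+1,\beta+1)}$ denotes $A_2$ with parameters $(\alpha,\beta)$ replaced by $(\alpha+1,\beta+1)$. This is verified in one line: $A_2^{(\alpha+1,\beta+1)}[y']=(1-t^2)y''+[(\beta+1)(1-t)-(\alpha+1)(1+t)]y'=(1-t^2)y''+[\beta-\alpha-(\alpha+\beta+2)t]y'=L^{\alpha,\beta}[y]$. Applying both sides of the factorisation to $P_n^{(\alpha,\beta)}$ and using, on the one hand, the eigenvalue equation \eqref{diffeq} and, on the other hand, the first identity just proved, gives
\begin{equation*}
\tfrac{1}{2}(n+\alpha+\beta+1)\,A_2^{(\alpha+1,\beta+1)}\bigl[P_{n-1}^{(\alpha+1,\beta+1)}(t)\bigr]\;=\;-n(n+\alpha+\beta+1)\,P_n^{(\alpha,\beta)}(t).
\end{equation*}
Cancelling the nonzero factor $\tfrac{1}{2}(n+\alpha+\beta+1)$ and relabelling $(\alpha+1,\beta+1,n-1)\mapsto(\alpha,\beta,n)$ yields the second formula in \eqref{ladder1}; the shift is precisely what forces the parameter restriction $\alpha,\beta>0$.

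The main potential obstacle is purely bookkeeping in step one: correctly aligning the Pochhammer symbols and the binomial coefficients after the reindexing. Once the first identity is established, the second follows essentially for free from the factorisation of $L^{\alpha,\beta}$, so no additional hard calculation is required.
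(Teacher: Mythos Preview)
Your argument is correct, but it proceeds differently from the paper on both halves.

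For $A_1$ the paper does not differentiate the explicit series. Instead it computes the commutator of $A_1$ with $L^{\alpha,\beta}$, obtaining the intertwining relation $L^{\alpha+1,\beta+1}A_1=A_1(L^{\alpha,\beta}+(\alpha+\beta+2))$; this shows $A_1[P_n^{(\alpha,\beta)}]$ is an eigenfunction of $L^{\alpha+1,\beta+1}$ with eigenvalue $-(n-1)(n+\alpha+\beta+2)$, hence a constant multiple of $P_{n-1}^{(\alpha+1,\beta+1)}$, and the constant is read off from leading coefficients. Your direct series manipulation is more elementary and avoids invoking the differential equation altogether, at the cost of a little bookkeeping with Pochhammer symbols.

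For $A_2$ the paper again argues via an intertwining identity, namely $L^{\alpha-1,\beta-1}A_2=A_2(L^{\alpha,\beta}-(\alpha+\beta))$, which identifies $A_2[P_n^{(\alpha,\beta)}]$ as an eigenfunction of $L^{\alpha-1,\beta-1}$ and then fixes the constant by leading coefficients. Your route via the factorisation $A_2^{(\alpha+1,\beta+1)}\circ A_1=L^{\alpha,\beta}$ is different and arguably cleaner: once the first identity is in hand, the second drops out with no further leading-coefficient matching. The paper's approach has the virtue of treating $A_1$ and $A_2$ symmetrically by the same commutator technique, which it then reuses for the later ladder pairs $B_i$, $D_i$, $F_i$; your factorisation trick is specific to this pair but gets the job done with less computation.
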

\begin{proof}
	Using the differential operator $L^{\alpha,\beta}$ defined in \eqref{diffeq}, we compute
	$$
	L^{\alpha,\beta}A_1[u]-A_1L^{\alpha,\beta}[u]\,=\,\left(2t\frac{d}{dt}+\alpha+\beta+2\right)A_1[u],
	$$
	therefore,
	$$
	L^{\alpha+1,\beta+1}A_1[u]\,=\,A_1\left[L^{\alpha,\beta}[u]+(\alpha+\beta+2)\,u\right].
	$$
	Letting $u=P_n^{(\alpha,\beta)}(t)$ and using \eqref{diffeq}, we get
	\begin{align*}
		L^{\alpha+1,\beta+1}A_1[P_n^{(\alpha,\beta)}(t)]&\,=\,A_1L^{\alpha,\beta}[P_n^{(\alpha,\beta)}(t)]+(\alpha+\beta+2)A_1[P_n^{(\alpha,\beta)}(t)]\\[10pt]
		&\,=\,\left(-n\,(n+\alpha+\beta+1)+\alpha+\beta+2 \right)\,A_1[P_n^{(\alpha,\beta)}(t)]\\[10pt]
		&\,=\,-(n-1)(n+\alpha+\beta+2)\,A_1[P_n^{(\alpha,\beta)}(t)],
	\end{align*}
	which implies that $A_1[P_n^{(\alpha,\beta)}(t)]\,=\,c_n^{\alpha,\beta}\,P_{n-1}^{(\alpha+1,\beta+1)}(t)$ for some constant $c_n^{\alpha,\beta}$. From the explicit expression of the Jacobi polynomials and comparing the leading coefficients on both sides of this last equation, we get
	$$
	c_n^{\alpha,\beta}\,=\,\frac{1}{2}(n+\alpha+\beta+1).
	$$
	
	Similarly,
	$$
	L^{\alpha,\beta}A_2[u]-A_2L^{\alpha,\beta}[u]\,=\,\left(2t\frac{d}{dt}-\alpha-\beta\right)A_2[u],
	$$
	or, equivalently,
	$$
	L^{\alpha-1,\beta-1}A_2[u]\,=\,A_2\left[L^{\alpha,\beta}[u]-(\alpha+\beta)\,u\right].
	$$
	Thus,
	\begin{align*}
		L^{\alpha-1,\beta-1}A_2[P_n^{(\alpha,\beta)}(t)]&\,=\,\left(-n\,(n+\alpha+\beta+1)-\alpha-\beta \right)\,A_2[P_n^{(\alpha,\beta)}(t)]\\[10pt]
		&\,=\,-(n+1)(n+\alpha+\beta)\,A_2[P_n^{(\alpha,\beta)}(t)],
	\end{align*}
	which implies that $A_2[P_n^{(\alpha,\beta)}(t)]\,=\,d_n^{\alpha,\beta}\,P_{n+1}^{(\alpha-1,\beta-1)}(t)$ for some constant $d_n^{\alpha,\beta}$. In order to deduce the value of $d_n^{\alpha,\beta}$, we note that we can write
	$$
	A_2[u]\,=\,-(t-1)^2\,\frac{du}{dt}-(\alpha+\beta)(t-1)\,u-2(t-1)\frac{du}{dt}-\alpha\,u.
	$$
	Using this last identity and the explicit expression of the Jacobi polynomials, we compare leading coefficients on both sides of $A_2[P_n^{(\alpha,\beta)}(t)]\,=\,d_n^{\alpha,\beta}\,P_{n+1}^{(\alpha-1,\beta-1)}(t)$ and deduce that $d_n^{\alpha,\beta}\,=\,-2\,(n+1)$.
\end{proof}


For $n\geqslant 0$, let
$$
B_1[u]\,=\,(1+t)\,\frac{du}{dt}+(n+\alpha+\beta+1)\,u \quad \text{and} \quad B_2[u]\,=\,(1-t^2)\frac{du}{dt}-[2\alpha+(1-t)\,n]\,u.
$$ 
We note that the coefficients of these operators are not fixed and depend on $n$.

\begin{prop}\label{univariateladders2}
	For $\beta>-1$, the univariate Jacobi polynomials satisfy
	\begin{equation}\label{ladder2}
		\begin{aligned}
			B_1[P_n^{(\alpha,\beta)}(t)]&\,=\,(n+\alpha+\beta+1)\,P_{n}^{(\alpha+1,\beta)}(t), & \alpha>-1,\\[10pt]
			B_2[P_n^{(\alpha,\beta)}(t)]&\,=\,-2(n+\alpha)\,P_{n}^{(\alpha-1,\beta)}(t), & \alpha>0.
		\end{aligned}
	\end{equation}
\end{prop}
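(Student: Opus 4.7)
The plan is to mimic the operational approach used above for $A_1, A_2$: for each of $B_1$ and $B_2$, I would derive a commutator identity with the Jacobi differential operator $L^{\alpha,\beta}$ from \eqref{diffeq}, show that $B_1[P_n^{(\alpha,\beta)}]$ and $B_2[P_n^{(\alpha,\beta)}]$ are eigenfunctions of the parameter-shifted operators $L^{\alpha+1,\beta}$ and $L^{\alpha-1,\beta}$ with the eigenvalues corresponding to $P_n^{(\alpha+1,\beta)}$ and $P_n^{(\alpha-1,\beta)}$, and finally pin down the proportionality constants by matching leading coefficients.

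For $B_1$, I would compute by direct differentiation
\begin{equation*}
L^{\alpha+1,\beta}\bigl[B_1[u]\bigr] - B_1\bigl[L^{\alpha,\beta}[u]\bigr] \,=\, L^{\alpha,\beta}[u] - n\,(1+t)\,u',
\end{equation*}
treating $n$ as a fixed parameter inside $B_1$. Setting $u = P_n^{(\alpha,\beta)}$ and invoking \eqref{diffeq}, the right-hand side collapses: using $(1+t)u' = B_1[u] - (n+\alpha+\beta+1)\,u$ from the definition of $B_1$, the scalar contribution coming from $L^{\alpha,\beta}[u]$ is absorbed, leaving
\begin{equation*}
L^{\alpha+1,\beta}\bigl[B_1[P_n^{(\alpha,\beta)}]\bigr] \,=\, -n(n+\alpha+\beta+2)\,B_1[P_n^{(\alpha,\beta)}],
\end{equation*}
which is precisely the eigenvalue equation satisfied by $P_n^{(\alpha+1,\beta)}$. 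Hence $B_1[P_n^{(\alpha,\beta)}]=c_n\,P_n^{(\alpha+1,\beta)}$ for some $c_n$, and comparing leading coefficients via the explicit series in the excerpt yields $c_n = n+\alpha+\beta+1$.

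The argument for $B_2$ is structurally identical: I would derive the analogous commutator relation between $L^{\alpha-1,\beta}$ and $B_2$, substitute $u = P_n^{(\alpha,\beta)}$, and use the shape of $B_2$ itself to re-express the residual terms as a multiple of $B_2[u]$, thereby showing that $B_2[P_n^{(\alpha,\beta)}]$ is an eigenfunction of $L^{\alpha-1,\beta}$ with eigenvalue $-n(n+\alpha+\beta)$. To extract the constant, I would rewrite $B_2[u]$ in a form convenient for leading-coefficient analysis, for instance by grouping $(1-t^2)u' - n(1-t)u = (1-t)\bigl[(1+t)u'-n\,u\bigr] - 2\alpha\,u$, and match the top-degree term against $-2(n+\alpha)\,P_n^{(\alpha-1,\beta)}$.

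The principal obstacle, relative to the previous proposition, is that the coefficients of $B_1$ and $B_2$ depend explicitly on $n$. As a consequence the commutators do not take the clean form $L^{\text{shifted}}B_i = B_i\circ\bigl(L^{\alpha,\beta}+\text{scalar}\bigr)$ that made the $A_1,A_2$ case so short; the simplification genuinely requires combining the eigenvalue equation for $P_n^{(\alpha,\beta)}$ with the internal structure of $B_i$ to close the argument. Once this bookkeeping is organized the rest is routine algebra.
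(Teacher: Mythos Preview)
Your treatment of $B_1$ matches the paper's almost line for line: compute the commutator of $B_1$ with $L^{\alpha,\beta}$, reorganize it as an intertwining relation $L^{\alpha+1,\beta}B_1[u]=B_1(L^{\alpha,\beta}[u]-nu)+L^{\alpha,\beta}[u]+n(n+\alpha+\beta+1)u$, substitute $u=P_n^{(\alpha,\beta)}$, and read off the leading coefficient.

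Where you diverge from the paper is $B_2$. You propose to repeat the commutator/eigenfunction argument, which does work but, as you note, the $n$-dependent coefficients make the bookkeeping heavier. The paper instead exploits an \emph{adjointness} relation: writing $B_2[u]=-(t-1)^2u'+n(t-1)u-2(t-1)u'-2\alpha u$ one sees immediately that $B_2[P_n^{(\alpha,\beta)}]$ has degree exactly $n$ with top coefficient giving the factor $-2(n+\alpha)$; expanding in the basis $\{P_k^{(\alpha-1,\beta)}\}_{k\leqslant n}$ and integrating by parts against $w_{\alpha-1,\beta}$ yields
\[
\int_{-1}^{1} B_2[P_n^{(\alpha,\beta)}]\,P_k^{(\alpha-1,\beta)}\,w_{\alpha-1,\beta}\,dt \;=\; -\int_{-1}^{1} P_n^{(\alpha,\beta)}\,B_1[P_k^{(\alpha-1,\beta)}]\,w_{\alpha,\beta}\,dt,
\]
and since $B_1[P_k^{(\alpha-1,\beta)}]$ has degree at most $k$, orthogonality kills every term with $k<n$. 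This route bypasses the messier $B_2$-commutator entirely and, more importantly, makes visible the structural fact that $B_1$ and $B_2$ are formal adjoints under the weighted pairing---a relationship your purely operational approach leaves hidden. Your approach has the merit of being uniform and not invoking orthogonality at all; the paper's trades a second commutator computation for one integration by parts and a use of the already-established $B_1$ identity. (Incidentally, in your regrouping for the leading-coefficient analysis the $-2\alpha u$ term should sit on the left with the rest of $B_2[u]$, not on the right; as written the displayed identity is off by that term, though your intent is clear.)
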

\begin{proof}
	First, we note that we can write
	\begin{align*}
		L^{\alpha,\beta}[u]&\,=\,[2(1+t)-(t+1)^2]\frac{d^2u}{dt^2}+[2(\beta+1)-(\alpha+\beta+2)(1+t)]\frac{du}{dt}\\[10pt]
		&\,=\,2(1+t)\frac{d^2u}{dt^2}+2(\beta+1)\frac{du}{dt}-\left\{(1+t)\frac{d}{dt}-n\right\}B_1[u]-n(n+\alpha+\beta+1)\,u.
	\end{align*}
	From here, if we commute $L^{\alpha,\beta}$ with $B_1$, we get
	\begin{align*}
		L^{\alpha,\beta}B_1[u]-B_1L^{\alpha,\beta}[u]&\,=\,2(1+t)\frac{d^2u}{dt^2}+2(\beta+1)\frac{du}{dt}\\[10pt]
		&\,=\,\left\{(1+t)\frac{d}{dt}-n\right\}B_1[u]+L^{\alpha,\beta}[u]+n(n+\alpha+\beta+1)\,u,
	\end{align*}
	or, equivalently,
	$$
	L^{\alpha+1,\beta}B_1[u]\,=\,B_1\left(L^{\alpha,\beta}[u]-n\,u\right)+L^{\alpha,\beta}[u]+n(n+\alpha+\beta+1)\,u.
	$$
	From \eqref{diffeq}, it follows that
	$$
	L^{\alpha+1,\beta}B_1[P_n^{(\alpha,\beta)}(t)]\,=\,-n(n+\alpha+\beta+2)\,B_1[P_n^{(\alpha,\beta)}(t)],
	$$
	which implies that $B_1[P_n^{(\alpha,\beta)}(t)]=c_n^{\alpha,\beta}\,P_n^{(\alpha+1,\beta)}(t)$. Writing,
	$$
	B_1[u]\,=\,(t-1)\frac{du}{dt}+(n+\alpha+\beta+1)\,u+2\frac{du}{dt},
	$$
	we can compare leading coefficients and obtain
	$$
	c_n^{\alpha,\beta}\,=\,n+\alpha+\beta+1.
	$$
	
	Notice that we can write
	$$
	B_2[u]\,=\,-(t-1)^2\frac{du}{dt}+n\,(t-1)\,u-2(t-1)\frac{du}{dt}-2\,\alpha\,u.
	$$
	Hence, 
	$$
	B_2\left[\left(\frac{t-1}{2} \right)^n\right]\,=\,-2(n+\alpha)\,\left(\frac{t-1}{2} \right)^n,
	$$
	which implies that $B_2[P_n^{(\alpha,\beta)}(t)]$ is a polynomial of degree $n$. By the orthogonality of $\{P_{n}^{(\alpha-1,\beta)}(t)\}_{n\geqslant 0}$ and comparing leading coefficients, we have
	$$
	B_2[P_n^{(\alpha,\beta)}(t)]\,=\,-2(n+\alpha)P_{n}^{(\alpha-1,\beta)}(t)+\sum_{k=0}^{n-1}d_k^{\alpha,\beta}\,P_{k}^{(\alpha-1,\beta)}(t),
	$$
	where
	$$
	d_k^{\alpha,\beta}\,h_k^{\alpha-1,\beta}\,=\,\int_{-1}^1B_2[P_n^{(\alpha,\beta)}(t)]\,P_{k}^{(\alpha-1,\beta)}(t)\,w_{\alpha-1,\beta}(t)\,dt, \quad 0\leqslant k \leqslant n-1,
	$$
	with
	$$
	h_k^{\alpha-1,\beta}\,=\,\int_{-1}^1\left(P_{k}^{(\alpha-1,\beta)}(t)\right)^2\,w_{\alpha-1,\beta}(t)\,dt,
	$$
	which is non zero for $\alpha-1,\beta>-1$ (see \cite{Sz78}). After integrating by parts and observing that $p(t)w_{\alpha,\beta}(t)|_{-1}^{1}\,=\,0$ for every polynomial $p(t)$, we obtain 
	$$
			\int_{-1}^1B_2[P_n^{(\alpha,\beta)}(t)]\,P_{k}^{(\alpha-1,\beta)}(t)\,w_{\alpha-1,\beta}(t)\,dt\,=\,-\int_{-1}^1P_n^{(\alpha,\beta)}(t)\,B_1[P_{k}^{(\alpha-1,\beta)}(t)]\,w_{\alpha,\beta}(t)\,dt,
	$$
	and, therefore,
	$$
	d_k^{\alpha,\beta}\,h_k^{\alpha-1,\beta}\,=\,-\int_{-1}^1P_n^{(\alpha,\beta)}(t)\,B_1[P_{k}^{(\alpha-1,\beta)}(t)]\,w_{\alpha,\beta}(t)\,dt.
	$$
	But $B_1[P_{k}^{(\alpha-1,\beta)}(t)]$ is a polynomial of degree at most $k$. From the orthogonality of the Jacobi polynomials, we deduce that $d_k^{\alpha,\beta}\,=\,0$ for $0\leqslant k \leqslant n-1$, and \eqref{ladder2} is now proved.
\end{proof}

Notice that in \eqref{ladder2}, only the first parameter of the Jacobi polynomials is shifted by one unit. Recall that the univariate classical Jacobi polynomials following reflection formula 
\begin{equation}\label{reflection}
	P_n^{(\alpha,\beta)}(t)\,=\,(-1)^n\,P_n^{(\beta,\alpha)}(-t).
\end{equation}
Using \eqref{reflection}, it is straightforward to verify that, for $n\geqslant 0$, the operators defined as 
$$
C_1[u]\,=\,(1-t)\,\frac{du}{dt}-(n+\alpha+\beta+1)\,u \quad \text{and} \quad C_2[u]\,=\,(1-t^2)\frac{du}{dt}+[2\beta+(1+t)\,n]\,u,
$$
act by shifting only the second parameter.

\begin{prop}
	For $\alpha>-1$, the univariate Jacobi polynomials satisfy
	\begin{equation}\label{ladder3}
		\begin{aligned}
			C_1[P_n^{(\alpha,\beta)}(t)]&\,=\,-(n+\alpha+\beta+1)\,P_{n}^{(\alpha,\beta+1)}(t), & \beta>-1,\\[10pt]
			C_2[P_n^{(\alpha,\beta)}(t)]&\,=\,2(n+\beta)\,P_{n}^{(\alpha,\beta-1)}(t), & \beta>0.
		\end{aligned}
	\end{equation}
\end{prop}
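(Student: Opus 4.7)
The plan is to derive \eqref{ladder3} directly from Proposition~\ref{univariateladders2} by pulling back through the reflection formula \eqref{reflection}, with no further operator--commutator computation. The key observation is that the operators $C_1,C_2$ are obtained from $B_1,B_2$ by the change of variable $t\mapsto -t$ combined with the parameter swap $\alpha\leftrightarrow\beta$; this is precisely the symmetry encoded by \eqref{reflection}.

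First, I would set $s=-t$ and use \eqref{reflection} to write $P_n^{(\alpha,\beta)}(t)=(-1)^n P_n^{(\beta,\alpha)}(s)$. I would then apply Proposition~\ref{univariateladders2} to $P_n^{(\beta,\alpha)}(s)$, reading its hypotheses with $\beta$ in the role of the first parameter and $\alpha$ in the role of the second. This produces ladder relations involving $(1+s)\tfrac{d}{ds}$, $(1-s^2)\tfrac{d}{ds}$, the shifted polynomials $P_n^{(\beta\pm 1,\alpha)}(s)$, and the constants $n+\alpha+\beta+1$ and $-2(n+\beta)$. Note that the parameter constraints of Proposition~\ref{univariateladders2} translate, after the role-swap, into $\alpha>-1$ together with $\beta>-1$ (resp.\ $\beta>0$), which are exactly the hypotheses of \eqref{ladder3}.

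Next, I would transport everything back to the variable $t$. On the left I would use $\tfrac{d}{ds}=-\tfrac{d}{dt}$ together with the identities $1+s=1-t$, $1-s^2=1-t^2$, $1-s=1+t$, and once more $P_n^{(\beta,\alpha)}(s)=(-1)^n P_n^{(\alpha,\beta)}(t)$; on the right I would apply \eqref{reflection} a second time to convert $P_n^{(\beta\pm 1,\alpha)}(s)$ into $(-1)^n P_n^{(\alpha,\beta\pm 1)}(t)$. The common factor $(-1)^n$ then cancels from both sides, and the two identities collapse to exactly the relations $C_1[P_n^{(\alpha,\beta)}(t)]=-(n+\alpha+\beta+1)\,P_n^{(\alpha,\beta+1)}(t)$ and $C_2[P_n^{(\alpha,\beta)}(t)]=2(n+\beta)\,P_n^{(\alpha,\beta-1)}(t)$.

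The only real obstacle is bookkeeping: one must carefully track the minus sign introduced by the chain rule $\tfrac{d}{ds}=-\tfrac{d}{dt}$ and the two $(-1)^n$ factors produced by the two applications of \eqref{reflection}, so that the signs in front of $-(n+\alpha+\beta+1)$ and $+2(n+\beta)$ in \eqref{ladder3} come out correctly. No further commutator analysis or leading-coefficient computation is needed, since all the hard work is already embedded in Proposition~\ref{univariateladders2}.
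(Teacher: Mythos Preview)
Your proposal is correct and is precisely the approach the paper takes: immediately before the proposition, the paper states that by using the reflection formula \eqref{reflection} one verifies that the operators $C_1,C_2$ arise from $B_1,B_2$ under $t\mapsto -t$ and $\alpha\leftrightarrow\beta$, and no separate proof is given. Your careful tracking of the chain-rule sign and the two factors of $(-1)^n$ fills in exactly the details the paper omits.
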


On one hand, the operators in \eqref{ladder2} and \eqref{ladder3} shift one parameter and leave the degree of the polynomial unchanged. In contrast, the following univariate operators defined for $n\geqslant 0$ as
$$
D_1[u]\,=\,(1+t)\,\frac{du}{dt}-n\,u \quad \text{and} \quad D_2[u]\,=\,(1-t^2)\frac{du}{dt}+[(n+\beta+1)(1-t)-\alpha\,(1+t)]\,u,
$$
shift the parameter $\alpha$ as well as the degree of the polynomial.

\begin{prop}
	For $\beta>-1$, the univariate Jacobi polynomials satisfy
	\begin{equation}\label{ladder4}
		\begin{aligned}
			D_1[P_n^{(\alpha,\beta)}(t)]&\,=\,(n+\beta)\,P_{n-1}^{(\alpha+1,\beta)}(t), & \alpha>-1,\\[10pt]
			D_2[P_n^{(\alpha,\beta)}(t)]&\,=\,-2(n+1)\,P_{n+1}^{(\alpha-1,\beta)}(t), & \alpha>0.
		\end{aligned}
	\end{equation}
\end{prop}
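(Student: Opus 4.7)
The plan is to emulate the two complementary strategies already used in Proposition~\ref{univariateladders2} and its predecessor: prove the raising-type identity for $D_1$ by a commutator computation with $L^{\alpha,\beta}$, then prove the lowering-type identity for $D_2$ by integration by parts, bootstrapped by the $D_1$ identity.

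For the first formula, I would exploit the decomposition $L^{\alpha+1,\beta}[u] = L^{\alpha,\beta}[u] - (1+t)\,u'$, which is immediate from \eqref{diffeq}. A short computation of the commutator $[L^{\alpha,\beta},D_1]$ (expanding the second derivative of $(1+t)u'-nu$) then yields the operator identity
$$
L^{\alpha+1,\beta}D_1[u] - D_1 L^{\alpha,\beta}[u] \,=\, L^{\alpha,\beta}[u] + (n+\alpha+\beta+1)(1+t)\,u'.
$$
Setting $u = P_n^{(\alpha,\beta)}(t)$ and applying \eqref{diffeq}, the last two terms combine into $(n+\alpha+\beta+1)\,D_1[P_n^{(\alpha,\beta)}(t)]$, producing
$$
L^{\alpha+1,\beta}\bigl[D_1[P_n^{(\alpha,\beta)}(t)]\bigr] \,=\, -(n-1)(n+\alpha+\beta+1)\,D_1[P_n^{(\alpha,\beta)}(t)],
$$
so $D_1[P_n^{(\alpha,\beta)}(t)]$ satisfies the Jacobi equation with parameters $(\alpha+1,\beta)$ and eigenvalue matching degree $n-1$. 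A leading-coefficient check shows the resulting polynomial has exact degree $n-1$, so it equals $c_n^{\alpha,\beta}\,P_{n-1}^{(\alpha+1,\beta)}(t)$. Evaluating at $t=-1$, where $D_1[u]\vert_{t=-1}=-n\,u(-1)$ and $P_n^{(\alpha,\beta)}(-1)=(-1)^n\binom{n+\beta}{n}$, fixes $c_n^{\alpha,\beta}=n+\beta$.

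For the second formula, I would mirror the integration-by-parts argument used for $B_2$ in the proof of Proposition~\ref{univariateladders2}. A direct leading-term computation confirms that $D_2[P_n^{(\alpha,\beta)}(t)]$ has exact degree $n+1$, hence admits the finite expansion
$$
D_2[P_n^{(\alpha,\beta)}(t)] \,=\, \sum_{k=0}^{n+1} e_k^{\alpha,\beta}\,P_k^{(\alpha-1,\beta)}(t).
$$
Writing $e_k^{\alpha,\beta}$ as an integral against $P_k^{(\alpha-1,\beta)}(t)\,w_{\alpha-1,\beta}(t)$ and integrating the $(1-t^2)\,du/dt$ piece by parts (boundary terms vanish for $\alpha>0$), the residual algebra reveals the adjoint identity
$$
e_k^{\alpha,\beta}\,h_k^{\alpha-1,\beta} \,=\, -\int_{-1}^{1}P_n^{(\alpha,\beta)}(t)\,\bigl[(1+t)\tfrac{d}{dt}-n\bigr]\!\bigl[P_k^{(\alpha-1,\beta)}(t)\bigr]\,w_{\alpha,\beta}(t)\,dt.
$$
For $0\leqslant k\leqslant n-1$, the bracket produces a polynomial of degree at most $k<n$, so orthogonality of $P_n^{(\alpha,\beta)}$ against polynomials of lower degree forces $e_k^{\alpha,\beta}=0$. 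For the borderline case $k=n$, the bracket is exactly $D_1[P_n^{(\alpha-1,\beta)}(t)]=(n+\beta)\,P_{n-1}^{(\alpha,\beta)}(t)$ by the first identity, and orthogonality again annihilates the integral. Thus only $e_{n+1}^{\alpha,\beta}$ survives, and comparison of leading coefficients yields $e_{n+1}^{\alpha,\beta}=-2(n+1)$.

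The step I expect to be most delicate is the $k=n$ case in the expansion for $D_2$: a naive degree count handles only $k\leqslant n-1$, so one genuinely needs the just-established $D_1$ identity as a lemma in order to close this last coefficient. Everything else reduces to routine operator algebra and standard leading-coefficient comparisons.
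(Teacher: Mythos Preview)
Your proposal is correct and follows essentially the same two-part strategy as the paper: a commutator/eigenvalue argument for $D_1$ followed by an integration-by-parts adjoint computation for $D_2$, with the $k=n$ coefficient killed by the just-established $D_1$ relation. The only cosmetic differences are that the paper derives the $D_1$ commutator by writing $D_1=B_1-(2n+\alpha+\beta+1)$ and reusing the $B_1$ computation (you compute it directly), and fixes the constant $c_n^{\alpha,\beta}$ by a leading-coefficient comparison rather than by evaluation at $t=-1$; for the $k=n$ step in the $D_2$ expansion the paper simply notes that $D_1$ drops the degree of $(t-1)^n$, whereas you invoke the full $D_1$ identity---either suffices.
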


\begin{proof}
	Observe that 
	$$
	D_1[u]\,=\,B_1[u]-(2n+\alpha+\beta+1)\,u.
	$$
	Therefore,
	\begin{align*}
		&L^{\alpha,\beta}D_1[u]-D_1L^{\alpha,\beta}[u]\,=\,L^{\alpha,\beta}B_1[u]-B_1L^{\alpha,\beta}[u]\\[10pt]
		&\,=\,\left\{(1+t)\frac{d}{dt}+n+\alpha+\beta+1\right\}D_1[u]+L^{\alpha,\beta}[u]+n(n+\alpha+\beta+1)\,u
	\end{align*}
	or, equivalently,
	$$
	L^{\alpha+1,\beta}D_1[u]\,=\,D_1\left(L^{\alpha,\beta}[u]+(n+\alpha+\beta+1)\,u\right)+L^{\alpha,\beta}[u]+n(n+\alpha+\beta+1)\,u.
	$$
	From \eqref{diffeq}, it follows that
	$$
	L^{\alpha+1,\beta}D_1[P_n^{(\alpha,\beta)}(t)]\,=\,-(n-1)\,(n+\alpha+\beta+1)\,D_1[P_n^{(\alpha,\beta)}(t)],
	$$
	which implies that $D_1[P_n^{(\alpha,\beta)}(t)]=c_n^{\alpha,\beta}\,P_{n-1}^{(\alpha+1,\beta)}(t)$. Writing,
	\begin{equation}\label{J7}
		D_1[u]\,=\,(t-1)\frac{du}{dt}-n\,u+2\frac{du}{dt},
	\end{equation}
	we can compare leading coefficients and obtain $c_n^{\alpha,\beta}\,=\,n+\beta$.
	
	Moreover, notice that we can write
	$$
	D_2[u]\,=\,-(t-1)^2\frac{du}{dt}-(n+\alpha+\beta+1)\,(t-1)\,u-2(t-1)\frac{du}{dt}-2\,\alpha\,u.
	$$
	Hence, 
	$$
	D_2\left[\left(\frac{t-1}{2} \right)^n\right]\,=\,-2(2n+\alpha+\beta+1)\,\left(\frac{t-1}{2} \right)^{n+1}-2(n+\alpha)\,\left(\frac{t-1}{2} \right)^{n},
	$$
	which implies that $D_2[P_n^{(\alpha,\beta)}(t)]$ is a polynomial of degree $n+1$. By the orthogonality of $\{P_{n}^{(\alpha-1,\beta)}(t)\}_{n\geqslant 0}$ and comparing leading coefficients, we have
	$$
	D_2[P_n^{(\alpha,\beta)}(t)]\,=\,-2(n+1)P_{n+1}^{(\alpha-1,\beta)}(t)+\sum_{k=0}^{n}d_k^{\alpha,\beta}\,P_{k}^{(\alpha-1,\beta)}(t),
	$$
	where
	$$
	d_k^{\alpha,\beta}\,h_k^{\alpha-1,\beta}\,=\,\int_{-1}^1D_2[P_n^{(\alpha,\beta)}(t)]\,P_{k}^{(\alpha-1,\beta)}(t)\,w_{\alpha-1,\beta}(t)\,dt, \quad 0\leqslant k \leqslant n.
	$$
	Integrating by parts as in the proof of Proposition \ref{univariateladders2}, we obtain
	$$
	d_k^{\alpha,\beta}\,h_k^{\alpha-1,\beta}\,=\,-\int_{-1}^1P_n^{(\alpha,\beta)}(t)\,D_1[P_{k}^{(\alpha-1,\beta)}(t)]\,w_{\alpha,\beta}(t)\,dt.
	$$
	On one hand, $D_1[P_{k}^{(\alpha-1,\beta)}(t)]$ is a polynomial of degree at most $k$ for $0\leqslant k \leqslant n-1$. From the orthogonality of the Jacobi polynomials, we deduce that $d_k^{\alpha,\beta}\,=\,0$ for $0\leqslant k \leqslant n-1$. On the other hand, by \eqref{J7}, we have
	$$
	D_1\left[\left(\frac{t-1}{2} \right)^n\right]\,=\,n\left(\frac{t-1}{2} \right)^{n-1}.
	$$
	Therefore, $D_1[P_{n}^{(\alpha-1,\beta)}(t)]$ is a polynomial of degree at most $n-1$ and, by orthogonality, $d_n^{\alpha,\beta}\,=\,0$. This proves \eqref{ladder4}.
\end{proof}

Using \eqref{ladder4} and \eqref{reflection}, we can show that the operators defined for $n\geqslant 0$ as
$$
E_1[u]\,=\,(1-t)\frac{du}{dt}+n\,u \quad \text{and} \quad E_2[u]\,=\,(1-t^2)\frac{du}{dt}+[(n+\alpha+1)(1+t)-\beta(1-t)]\,u,
$$
shift the parameter $\beta$ as well as the degree of the Jacobi polynomials.

\begin{prop}
	For $\alpha>-1$, the univariate Jacobi polynomials satisfy
	\begin{equation}\label{ladder5}
		\begin{aligned}
			E_1[P_n^{(\alpha,\beta)}(t)]&\,=\,(n+\alpha)\,P_{n-1}^{(\alpha,\beta+1)}(t), & \beta>-1,\\[10pt]
			E_2[P_n^{(\alpha,\beta)}(t)]&\,=\,-2(n+1)\,P_{n+1}^{(\alpha,\beta-1)}(t), & \beta>0.
		\end{aligned}
	\end{equation}
\end{prop}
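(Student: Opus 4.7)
The plan is to deduce \eqref{ladder5} from \eqref{ladder4} via the reflection identity \eqref{reflection}, exactly as the paragraph preceding the statement suggests. Under the substitution $t\mapsto -t$, an $\alpha$-shifting operator with parameters swapped naturally turns into a $\beta$-shifting operator, so essentially no new analytic content beyond the previous proposition is required.

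The first step is to apply \eqref{ladder4} with the parameters $\alpha$ and $\beta$ interchanged. Writing the independent variable as $s$, this yields
\[
(1+s)\,\frac{d}{ds}P_n^{(\beta,\alpha)}(s)-n\,P_n^{(\beta,\alpha)}(s)=(n+\alpha)\,P_{n-1}^{(\beta+1,\alpha)}(s),
\]
together with the analogous identity coming from the $D_2$ operator, under the hypotheses $\alpha>-1$ for $E_1$ and $\beta>0$ for $E_2$.

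Next I would carry out the change of variable $s=-t$ on both sides. The factors $(1\pm s)$ become $(1\mp t)$, the coefficient $(1-s^2)$ is invariant, and the chain rule contributes a factor of $-1$ from the derivative. The left-hand sides then take, up to an overall sign, the shape of $E_1$ (respectively $E_2$) applied to the function $P_n^{(\beta,\alpha)}(-t)$. Invoking \eqref{reflection} once on the input, $P_n^{(\beta,\alpha)}(-t)=(-1)^n P_n^{(\alpha,\beta)}(t)$, and once on the output, $P_{n-1}^{(\beta+1,\alpha)}(-t)=(-1)^{n-1}P_{n-1}^{(\alpha,\beta+1)}(t)$ and $P_{n+1}^{(\beta-1,\alpha)}(-t)=(-1)^{n+1}P_{n+1}^{(\alpha,\beta-1)}(t)$, the various powers of $-1$ collapse and deliver \eqref{ladder5}.

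The only real difficulty is keeping the signs straight. Three sources of $-1$ appear: the chain-rule sign from $\partial_s=-\partial_t$, the reflection factor $(-1)^n$ applied to the input polynomial, and the reflection factor $(-1)^{n\pm 1}$ applied to the output polynomial. In both cases $(-1)^n(-1)^{n\pm 1}=-1$, and multiplying by the derivative's extra $-1$ restores $+1$, so the prefactors $(n+\alpha)$ and $-2(n+1)$ survive with exactly the signs claimed in \eqref{ladder5}. No further orthogonality, commutator, or leading-coefficient argument is needed beyond this sign bookkeeping.
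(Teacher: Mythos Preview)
Your approach is exactly the paper's: it offers no proof beyond the sentence ``using \eqref{ladder4} and \eqref{reflection}''. One caveat on your sign bookkeeping: the chain-rule factor $-1$ multiplies only the derivative term of the operator, not the zeroth-order term. For $D_1$ this still yields an overall sign (the zeroth-order part $-n$ is unchanged under $s\mapsto -t$), so the swapped $D_1$ becomes $-E_1$ and your count goes through. For the swapped $D_2$, however, the substitution $s\mapsto -t$ produces
\[
-(1-t^2)\partial_t+\bigl[(n+\alpha+1)(1+t)-\beta(1-t)\bigr]
=-\Bigl\{(1-t^2)\partial_t-\bigl[(n+\alpha+1)(1+t)-\beta(1-t)\bigr]\Bigr\},
\]
so the bracket carries a \emph{minus} sign relative to the $E_2$ printed in the paper. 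This is in fact a typo there: with the printed $E_2$ the identity already fails at $n=0$, where one computes $E_2[1]=(\alpha+1-\beta)+(\alpha+\beta+1)t=+2\,P_1^{(\alpha,\beta-1)}(t)$ rather than $-2\,P_1^{(\alpha,\beta-1)}(t)$. Your reflection argument, carried out carefully, delivers the correct operator $E_2[u]=(1-t^2)u'-[(n+\alpha+1)(1+t)-\beta(1-t)]\,u$, for which \eqref{ladder5} holds as stated.
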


We will now finish our list of ladder operators for Jacobi polynomials. We define the two operators with fixed coefficients:
$$
F_1[u]\,=\,(1+t)\frac{du}{dt}+\beta\,u \quad \text{and} \quad F_2[u]\,=\,(1-t)\frac{du}{dt}-\alpha\,u.
$$

\begin{prop}
	The univariate Jacobi polynomials satisfy
	\begin{equation}\label{ladder6}
		\begin{aligned}
			F_1[P_n^{(\alpha,\beta)}(t)]&\,=\,(n+\beta)\,P_{n}^{(\alpha+1,\beta-1)}(t), & \alpha,\,\beta-1>-1,\\[10pt]
			F_2[P_n^{(\alpha,\beta)}(t)]&\,=\,-(n+\alpha)\,P_{n}^{(\alpha-1,\beta+1)}(t), & \alpha-1,\beta>-1.
		\end{aligned}
	\end{equation}
\end{prop}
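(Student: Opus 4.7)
The plan is to prove the first identity for $F_1$ via an intertwining argument with the hypergeometric operator $L^{\alpha,\beta}$, mirroring the approach used earlier for $B_1$ and $D_1$, and then to deduce the second identity for $F_2$ from the reflection formula \eqref{reflection}, just as $C_1, C_2, E_1, E_2$ were obtained from $B_1, B_2, D_1, D_2$.

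For the first identity I begin with the observation
$$L^{\alpha+1,\beta-1}[u]\,=\,L^{\alpha,\beta}[u]\,-\,2\,u',$$
which shows that shifting $\alpha\to\alpha+1$ and $\beta\to\beta-1$ perturbs $L^{\alpha,\beta}$ only by a pure first-order term. A direct expansion of both sides in $u,u',u'',u'''$ then yields the intertwining identity
$$L^{\alpha+1,\beta-1}F_1[u]\,=\,F_1\,L^{\alpha,\beta}[u].$$
Plugging $u=P_n^{(\alpha,\beta)}$ into this identity and using \eqref{diffeq} shows that $F_1[P_n^{(\alpha,\beta)}]$ is an eigenfunction of $L^{\alpha+1,\beta-1}$ with eigenvalue $-n(n+\alpha+\beta+1)$, which (since $(\alpha+1)+(\beta-1)+1=\alpha+\beta+1$) is precisely the eigenvalue of $P_n^{(\alpha+1,\beta-1)}$. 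A one-line leading-coefficient calculation shows that $F_1$ sends the leading term $k_n t^n$ of $P_n^{(\alpha,\beta)}$ to $(n+\beta)\,k_n\,t^n$, so $F_1[P_n^{(\alpha,\beta)}]$ has degree exactly $n$ and is therefore a nonzero multiple of $P_n^{(\alpha+1,\beta-1)}$. Since $P_n^{(\alpha,\beta)}$ and $P_n^{(\alpha+1,\beta-1)}$ share the same leading coefficient $(n+\alpha+\beta+1)_n/(2^n n!)$, the constant is $n+\beta$, which gives the first relation in \eqref{ladder6}.

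For the second identity I use the reflection formula \eqref{reflection}. The substitution $t\mapsto -t$ turns $F_2$ with parameters $(\alpha,\beta)$ into $-F_1$ with the parameters swapped to $(\beta,\alpha)$, and at the same time identifies $P_n^{(\alpha,\beta)}(t)$ with $(-1)^n P_n^{(\beta,\alpha)}(-t)$. Applying the first relation of \eqref{ladder6} (already proved) to $P_n^{(\beta,\alpha)}$, and then using \eqref{reflection} once more to return from $(\beta+1,\alpha-1)$-parameters in $-t$ to $(\alpha-1,\beta+1)$-parameters in $t$, the two factors of $(-1)^n$ cancel and one recovers $F_2[P_n^{(\alpha,\beta)}]=-(n+\alpha)\,P_n^{(\alpha-1,\beta+1)}$.

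The main obstacle will be the verification of the intertwining identity $L^{\alpha+1,\beta-1}F_1=F_1L^{\alpha,\beta}$: although entirely algebraic, it requires matching four polynomial coefficients in $t$ (those of $u''',u'',u',u$) and careful bookkeeping. Once this identity is in hand the argument follows the template used for $B_1$ and $D_1$, and the translation to $F_2$ via \eqref{reflection} is immediate. No new conceptual tool beyond those already deployed in the previous propositions is needed.
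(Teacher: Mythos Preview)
Your proposal is correct and follows essentially the same approach as the paper: both establish the intertwining identity $L^{\alpha+1,\beta-1}F_1=F_1L^{\alpha,\beta}$ (the paper writes it as the commutator $L^{\alpha,\beta}F_1-F_1L^{\alpha,\beta}=2\frac{d}{dt}F_1$, which is the same thing since $L^{\alpha+1,\beta-1}=L^{\alpha,\beta}-2\frac{d}{dt}$), conclude that $F_1[P_n^{(\alpha,\beta)}]$ is a polynomial eigenfunction of $L^{\alpha+1,\beta-1}$ with the correct eigenvalue, and fix the constant by comparing leading coefficients; the second relation is then obtained by reflection. Your write-up is in fact more explicit than the paper's, which is extremely terse here and (apparently by a typo) cites the Zernike invariance \eqref{invariance} rather than the Jacobi reflection formula \eqref{reflection} for the passage from $F_1$ to $F_2$.
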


\begin{proof}
	We compute
	\begin{align*}
		L^{\alpha,\beta}F_1[u]-F_1L^{\alpha,\beta}[u]\,=\,2\frac{d}{dt}F_1[u],
	\end{align*}
or, equivalently,
$$
L^{\alpha+1,\beta-1}F_1[u]\,=\,F_1L^{\alpha,\beta}[u].
$$
From here together with \eqref{invariance}, we deduce \eqref{ladder6}. 
\end{proof}

\section{Ladder operators for Zernike polynomials}\label{ladderZernike}

In this section, we derive ladder operators for the generalized Zernike polynomials by directly employing ladder operators satisfied by the univariate Jacobi polynomials. We give several ladder operators that shift the parameter and degree of the complex generalized Zernike polynomials by at most one unit, that is, each ladder operator maps $Q^{\mu}_{k,j}(z,\bar{z})$ to $Q^{\eta}_{m,\ell}(z,\bar{z})$ with $|\mu-\eta|\leqslant 1$, $|k-m|\leqslant 1$, $|j-\ell|\leqslant 1$. 

The first two ladder operators for the complex generalized Zernike polynomials are deduced in the following theorem.

\begin{theorem}
	The complex generalized Zernike polynomials satisfy
	\begin{equation}\label{ladderZ1}
		\begin{aligned}
		&\frac{\partial}{\partial z}Q^{\mu}_{k,j}(z,\bar{z})\,=\,\frac{k\,(j+\mu+1)}{\mu+1}Q^{\mu+1}_{k-1,j}(z,\bar{z}), & \mu> -1,\\[10pt]
		&\left\{(1-z\bar{z})\frac{\partial}{\partial \bar{z}}-\mu\,z \right\}Q^{\mu}_{k,j}(z,\bar{z})\,=\,-\mu\,Q^{\mu-1}_{k+1,j}(z,\bar{z}), & \mu> 0.
		\end{aligned}
	\end{equation}
\end{theorem}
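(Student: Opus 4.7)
The plan is to substitute the explicit formula \eqref{zernikecomplex} into each of the two identities and reduce the action of the bivariate operator on the Jacobi factor to one of the univariate ladder operators $F_1,F_2$ from \eqref{ladder6}. Throughout I write $t=2z\bar{z}-1$, so that $1+t=2z\bar{z}$ and $1-t=2(1-z\bar{z})$; these substitutions will turn the coefficients produced by the chain rule into precisely the coefficients appearing in $F_1$ and $F_2$.

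For the first identity, I would apply $\partial/\partial z$ to $Q^{\mu}_{k,j}(z,\bar{z})=\frac{j!}{(\mu+1)_j}\,z^{k-j}\,P_j^{(\mu,k-j)}(t)$ using the product and chain rules. Pulling out $z^{k-j-1}$ and rewriting the chain-rule factor $2\bar{z}z$ as $1+t$, the remaining bracket takes the form
$$
(k-j)\,P_j^{(\mu,k-j)}(t) + (1+t)\,(P_j^{(\mu,k-j)})'(t),
$$
which is exactly $F_1[P_j^{(\mu,k-j)}(t)]$ with parameter $\beta=k-j$. The first line of \eqref{ladder6} then evaluates this to $k\,P_j^{(\mu+1,k-j-1)}(t)$. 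Matching the result against the formula \eqref{zernikecomplex} for $Q^{\mu+1}_{k-1,j}$ and simplifying the Pochhammer ratio $(\mu+2)_j/(\mu+1)_j=(j+\mu+1)/(\mu+1)$ yields the stated coefficient $k(j+\mu+1)/(\mu+1)$.

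For the second identity, $\bar{z}$ enters $Q^{\mu}_{k,j}$ only through $t$, so $\partial/\partial\bar{z}$ produces the single term $\tfrac{2j!}{(\mu+1)_j}\,z^{k-j+1}\,(P_j^{(\mu,k-j)})'(t)$. Multiplying by $1-z\bar{z}$, subtracting $\mu z\, Q^{\mu}_{k,j}$, and using $2(1-z\bar{z})=1-t$, the combined bracket becomes
$$
(1-t)\,(P_j^{(\mu,k-j)})'(t) - \mu\, P_j^{(\mu,k-j)}(t) = F_2[P_j^{(\mu,k-j)}(t)]
$$
with $\alpha=\mu$. The second line of \eqref{ladder6} gives $-(j+\mu)\,P_j^{(\mu-1,k-j+1)}(t)$. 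Matching against \eqref{zernikecomplex} for $Q^{\mu-1}_{k+1,j}$ and observing that $(j+\mu)(\mu)_j/(\mu+1)_j=\mu$ produces the stated right-hand side $-\mu\, Q^{\mu-1}_{k+1,j}$.

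The computations are direct once one notices that the factors $1+t$ and $1-t$ arising from the chain rule through $t=2z\bar{z}-1$ are exactly what is needed to assemble $F_1$ and $F_2$; this pattern recognition is the only nontrivial step, after which \eqref{ladder6} and the two short Pochhammer simplifications close each argument. I would also note that the invariance \eqref{invariance} does \emph{not} relate the two statements: swapping $z\leftrightarrow\bar{z}$ and $k\leftrightarrow j$ in the first identity produces a relation sending $\partial/\partial\bar{z}\,Q^{\mu}_{k,j}$ to $Q^{\mu+1}_{k,j-1}$ rather than $Q^{\mu-1}_{k+1,j}$, so the two assertions of \eqref{ladderZ1} genuinely require separate computations.
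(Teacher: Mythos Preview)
Your proof is correct. The paper's own argument takes a slightly different route: it uses the conjugate representation \eqref{conjugation}, $Q^{\mu}_{k,j}=\frac{k!}{(\mu+1)_k}\bar{z}^{\,j-k}P_k^{(\mu,j-k)}(2z\bar{z}-1)$, so that $\partial/\partial z$ hits only the Jacobi factor and the first identity reduces directly to $A_1$ from \eqref{ladder1}; similarly the second identity reduces to $A_2$. You instead work with the primary representation \eqref{zernikecomplex} and recognize $F_1$ and $F_2$ from \eqref{ladder6}. The paper in fact anticipates exactly your computation in its closing remark (``this result can also be obtained by computing with \eqref{zernikecomplex} and using $F_1$ and $F_2$ instead of $A_1$ and $A_2$, respectively''), so what you have written is precisely the omitted alternative. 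The trade-off is that the paper's choice of \eqref{conjugation} makes $\partial/\partial z$ act on a single factor, yielding the simplest ladder $A_1=d/dt$, whereas your choice keeps the normalization constants tied to the index $j$ throughout and avoids switching between the two representations; both require one Pochhammer simplification at the end. Your closing observation that \eqref{invariance} does not link the two parts of the theorem is also correct; in the paper that symmetry is used only afterward to obtain the corollary \eqref{ladderZ2}.
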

\begin{proof}
	Using \eqref{conjugation}, we compute
	\begin{align*}
	\frac{\partial}{\partial z}Q^{\mu}_{k,j}(z,\bar{z})\,=\,2\,\frac{k!}{(\mu+1)_k}\bar{z}^{j-k+1}\left(P_k^{(\mu,j-k)}\right)'(2z\bar{z}-1).
	\end{align*}
Notice that if we set $t=2r-1$ where $r\,=\,z\bar{z}$, then 
$$
A_1[P_k^{(\mu,j-k)}(t)]\,=\,2\left(P_k^{(\mu,j-k)}\right)'(2r-1).
$$
Then, from \eqref{ladder1}, we get
\begin{align*}
	\frac{\partial}{\partial z}Q^{\mu}_{k,j}(z,\bar{z})&\,=\,(j+\mu+1)\frac{k!}{(\mu+1)_k}\bar{z}^{j-k+1}\,P_{k-1}^{(\mu+1,j-k+1)}(2z\bar{z}-1)\\[10pt]
	&\,=\,\frac{k\,(j+\mu+1)}{\mu+1}Q^{\mu+1}_{k-1,j}(z,\bar{z}),
\end{align*} 
and we obtain the first identity in \eqref{ladderZ1}.

Using \eqref{conjugation} again, we compute
\begin{equation*}
	\begin{aligned}
		&\left\{(1-z\bar{z})\frac{\partial}{\partial \bar{z}}-\mu\,z \right\}Q^{\mu}_{k,j}(z,\bar{z})\\[10pt]
		&=\,\frac{k!\,\bar{z}^{j-k-1}}{(\mu+1)_k}\left[2(1-z\bar{z})z\bar{z}\left(P_k^{(\mu,j-k)}\right)'+\left[(j-k)(1-z\bar{z})-\mu\,z\bar{z}\right]P_k^{(\mu,j-k)}\right](2z\bar{z}-1).	
	\end{aligned}
\end{equation*}
The expression in brackets is precisely $\frac{1}{2}A_2[P_k^{(\mu,j-k)}(t)]$ under the same change of variable as before. Then, from \eqref{ladder1} it follows that
\begin{equation*}
	\begin{aligned}
		\left\{(1-z\bar{z})\frac{\partial}{\partial z}-\mu\,z \right\}Q^{\mu}_{k,j}(z,\bar{z})&\,=\,\frac{1}{2}\frac{k!}{(\mu+1)_k}\,\bar{z}^{j-k-1}A_2[P_k^{(\mu,j-k)}](2z\bar{z}-1)\\[10pt]
		&\,=\,-\frac{\,(k+1)!}{(\mu+1)_k}\,\bar{z}^{j-k-1}P_{k+1}^{(\mu-1,j-k-1)}(2z\bar{z}-1)\\[10pt]
		&\,=\,-\mu\,Q^{\mu-1}_{k+1,j}(z,\bar{z}),
	\end{aligned}
\end{equation*}
and the second identity in \eqref{ladderZ1} follows. 

 We remark that this result can also be obtained by computing with \eqref{zernikecomplex} and using $F_1$ and $F_2$ instead of $A_1$ and $A_2$, respectively. We omit the details.
\end{proof}

The following corollary is a consequence of the previous theorem and \eqref{invariance}.
\begin{corollary}
	The complex generalized Zernike polynomials satisfy
	\begin{equation}\label{ladderZ2}
		\begin{aligned}
			&\frac{\partial}{\partial \bar{z}}Q^{\mu}_{k,j}(z,\bar{z})\,=\,\frac{j\,(k+\mu+1)}{\mu+1}Q^{\mu+1}_{k,j-1}(z,\bar{z}), & \mu> -1,\\[10pt]
			&\left\{(1-z\bar{z})\frac{\partial}{\partial \bar{z}}-\mu\,\bar{z} \right\}Q^{\mu}_{k,j}(z,\bar{z})\,=\,-\mu\,Q^{\mu-1}_{k,j+1}(z,\bar{z}), & \mu> 0.
		\end{aligned}
	\end{equation}
\end{corollary}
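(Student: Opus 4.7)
The plan is to derive both identities directly from the previous theorem by invoking the invariance \eqref{invariance}, $Q^{\mu}_{k,j}(z,\bar z) = Q^{\mu}_{j,k}(\bar z, z)$. The key observation is that the $Q^{\mu}_{k,j}(z,\bar z)$ are polynomials in the two independent complex variables $z$ and $\bar z$, so one may legitimately substitute and differentiate with the two variables treated as formal symbols. My strategy is, for each identity of \eqref{ladderZ1}, to interchange the variables $z \leftrightarrow \bar z$ and the indices $k \leftrightarrow j$, and then to apply \eqref{invariance} twice to bring everything back to the standard form $Q^{\mu}_{k,j}(z,\bar z)$.

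For the first identity of the corollary, I would start from the first relation in \eqref{ladderZ1} applied to $Q^{\mu}_{j,k}$, viewed as a polynomial identity in two formal variables $u,v$:
$$\frac{\partial}{\partial u} Q^{\mu}_{j,k}(u,v) = \frac{j(k+\mu+1)}{\mu+1}\, Q^{\mu+1}_{j-1,\,k}(u,v).$$
Specializing $(u,v) = (\bar z, z)$ turns $\partial/\partial u$ into $\partial/\partial \bar z$; then a first application of \eqref{invariance} rewrites $Q^{\mu}_{j,k}(\bar z, z) = Q^{\mu}_{k,j}(z, \bar z)$ on the left, and a second one rewrites $Q^{\mu+1}_{j-1,k}(\bar z, z) = Q^{\mu+1}_{k,\,j-1}(z, \bar z)$ on the right, yielding precisely the first identity of \eqref{ladderZ2}.

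For the second identity I would apply the same substitution to the second relation of \eqref{ladderZ1}, namely
$$\{(1-uv)\, \partial_v - \mu\, u\}\, Q^{\mu}_{j,k}(u,v) = -\mu\, Q^{\mu-1}_{j+1,\,k}(u,v),$$
and set $(u,v) = (\bar z, z)$. The differential operator then acts on $Q^{\mu}_{j,k}(\bar z, z) = Q^{\mu}_{k,j}(z, \bar z)$, while the right-hand side becomes $-\mu\, Q^{\mu-1}_{j+1,k}(\bar z, z) = -\mu\, Q^{\mu-1}_{k,\,j+1}(z, \bar z)$ by a further application of \eqref{invariance}, as required.

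There is no real obstacle here: the argument is purely formal bookkeeping, with no new computation beyond what the theorem provides. The only care required is to perform the swap $(z,\bar z) \leftrightarrow (\bar z, z)$ and $(k,j) \leftrightarrow (j,k)$ consistently, so that the partial derivative, the factors of $z$ or $\bar z$ inside the operator, and the subindices of the target polynomial $Q^{\mu\pm 1}_{\cdot,\cdot}$ are all interchanged simultaneously before \eqref{invariance} is used to restore the canonical argument order.
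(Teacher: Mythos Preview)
Your approach is exactly the paper's: the corollary is stated there as ``a consequence of the previous theorem and \eqref{invariance}'', with no further details, and your formal-variables argument is a clean execution of precisely that idea.

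One remark, though: if you actually carry out your substitution $(u,v)=(\bar z,z)$ in the second identity, the operator $(1-uv)\,\partial_v-\mu\,u$ becomes $(1-z\bar z)\,\partial_z-\mu\,\bar z$, with $\partial/\partial z$ rather than the $\partial/\partial\bar z$ written in the stated corollary. This is a typo in the paper's statement of \eqref{ladderZ2}; the correct second identity (the one your argument actually proves) is
\[
\left\{(1-z\bar z)\frac{\partial}{\partial z}-\mu\,\bar z\right\}Q^{\mu}_{k,j}(z,\bar z)=-\mu\,Q^{\mu-1}_{k,\,j+1}(z,\bar z),
\]
and this is exactly the form of \eqref{ladderZ2} that the paper itself uses later in the proofs of Proposition~\ref{eigen} and Lemmas~\ref{laddersobolev1}--\ref{laddersobolev2}. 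So your derivation is right and the printed operator is wrong; you may want to flag this.
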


The ladder operators in the following theorem shift the parameter $\mu$ by one unit, but keep the other two paramenter $k$ and $j$ unchanged and, hence, the total degree of the polynomials is preserved under the action of these operators. 

\begin{theorem}
	The complex generalized Zernike polynomials satisfy
	\begin{equation}\label{ladderZ3}
		\begin{aligned}
			&\left\{z\frac{\partial}{\partial z}+j+\mu+1 \right\}Q^{\mu}_{k,j}(z,\bar{z})\,=\,\frac{(k+\mu+1)\,(j+\mu+1)}{\mu+1}Q^{\mu+1}_{k,j}(z,\bar{z}), & \mu> -1,\\[10pt]
			&\left\{(1-z\bar{z})\,z\frac{\partial}{\partial z}-k\,(1-z\bar{z})-\mu \right\}Q^{\mu}_{k,j}(z,\bar{z})\,=\,-\mu\,Q^{\mu-1}_{k,j}(z,\bar{z}), & \mu> 0.
		\end{aligned}
	\end{equation}
\end{theorem}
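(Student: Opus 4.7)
The plan is to mirror the strategy used for \eqref{ladderZ1}, but now apply the univariate operators $B_1$ and $B_2$ from Proposition \ref{univariateladders2} (which shift only the first Jacobi parameter, leaving the degree fixed) to the representation \eqref{conjugation} of $Q^{\mu}_{k,j}$. Writing $Q^{\mu}_{k,j}(z,\bar{z})=\frac{k!}{(\mu+1)_k}\bar{z}^{j-k}P_k^{(\mu,j-k)}(2z\bar{z}-1)$ and setting $t=2z\bar{z}-1$, so that $1+t=2z\bar{z}$ and $1-t=2(1-z\bar{z})$, I would compute
$$z\frac{\partial}{\partial z}Q^{\mu}_{k,j}(z,\bar{z})=\frac{k!}{(\mu+1)_k}\bar{z}^{j-k}(1+t)\,\bigl(P_k^{(\mu,j-k)}\bigr)'(t)\Big|_{t=2z\bar{z}-1}.$$
The point is that with the identification $n=k$, $\alpha=\mu$, $\beta=j-k$, the coefficient $n+\alpha+\beta+1$ in $B_1$ equals $j+\mu+1$. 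Therefore the bracketed expression appearing in $\{z\partial_z+j+\mu+1\}Q^{\mu}_{k,j}$ is exactly $B_1[P_k^{(\mu,j-k)}(t)]$.

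For the first identity, applying \eqref{ladder2} converts this bracket into $(j+\mu+1)P_k^{(\mu+1,j-k)}(t)$, and then rewriting $\bar{z}^{j-k}P_k^{(\mu+1,j-k)}(2z\bar{z}-1)$ in terms of $Q^{\mu+1}_{k,j}$ via \eqref{conjugation} produces the prefactor $\frac{(\mu+2)_k}{(\mu+1)_k}=\frac{\mu+k+1}{\mu+1}$, yielding the stated constant $\frac{(k+\mu+1)(j+\mu+1)}{\mu+1}$.

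For the second identity, I would use the factorization $(1-z\bar{z})(1+t)=\tfrac12(1-t^2)$ to see that
$$(1-z\bar{z})\,z\frac{\partial}{\partial z}Q^{\mu}_{k,j}=\frac{1}{2}\frac{k!}{(\mu+1)_k}\bar{z}^{j-k}(1-t^2)\bigl(P_k^{(\mu,j-k)}\bigr)'(t),$$
while the extra term $-k(1-z\bar{z})-\mu$ contributes $-\tfrac12\bigl[k(1-t)+2\mu\bigr]P_k^{(\mu,j-k)}(t)$. With $\alpha=\mu$ and $n=k$ this is exactly $\tfrac12 B_2[P_k^{(\mu,j-k)}(t)]$, so \eqref{ladder2} gives $-(k+\mu)P_k^{(\mu-1,j-k)}(t)$. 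Using \eqref{conjugation} for $\mu-1$ to package this back into $Q^{\mu-1}_{k,j}$, and simplifying $\frac{(\mu)_k}{(\mu+1)_k}=\frac{\mu}{\mu+k}$, the factor $(k+\mu)$ cancels and the right-hand side collapses to $-\mu\,Q^{\mu-1}_{k,j}$.

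Neither step is conceptually hard once the change of variable $t=2z\bar{z}-1$ is in place; the only real obstacle is the bookkeeping for the Pochhammer ratios and the careful matching of the multipliers $(1\pm t)$ with the factors $z\bar{z}$ and $1-z\bar{z}$ that appear in the bivariate operators. As the authors remark after the proof of \eqref{ladderZ1}, one could alternatively start directly from the representation \eqref{zernikecomplex} and use the pair $(C_1,C_2)$ (or $(B_1,B_2)$ depending on which representation one chooses); by \eqref{invariance} the two routes are equivalent, so either works.
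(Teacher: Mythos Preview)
Your proof is correct and follows essentially the same strategy as the paper: apply the univariate ladders $B_1$ and $B_2$ after the substitution $t=2z\bar{z}-1$. The only cosmetic difference is that the paper starts from the representation \eqref{zernikecomplex} (so the Jacobi factor is $P_j^{(\mu,k-j)}$, with $n=j$, $\beta=k-j$, and the extra $(k-j)$ from differentiating $z^{k-j}$ combines with $j+\mu+1$ to give the $B_1$ coefficient $k+\mu+1$), whereas you start from \eqref{conjugation} (Jacobi factor $P_k^{(\mu,j-k)}$, $n=k$, $\beta=j-k$, no product-rule term since $\bar{z}^{j-k}$ is independent of $z$). By \eqref{invariance} these are two sides of the same computation. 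One small quibble: your closing parenthetical suggesting $(C_1,C_2)$ with \eqref{zernikecomplex} is not quite right for this theorem, since $C_1,C_2$ shift $\beta$ rather than $\alpha$; the paper in fact uses $(B_1,B_2)$ with \eqref{zernikecomplex}.
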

\begin{proof}
	Using \eqref{zernikecomplex}, we compute
	\begin{align*}
		&\left\{z\frac{\partial}{\partial z}+j+\mu+1 \right\}Q^{\mu}_{k,j}(z,\bar{z})\\[10pt]
		&\,=\,\frac{j!}{(\mu+1)_j}z^{k-j}\left[2z\bar{z}\left(P_j^{(\mu,k-j)} \right)'+(k+\mu+1)P_j^{(\mu,k-j)} \right](2z\bar{z}-1),
	\end{align*}
where the expression in brackets is $B_1[P_j^{(\mu,k-j)}(t)]$ under the change of variables $t=2r-1$ where $r\,=\,z\bar{z}$. By \eqref{ladder2}, we have
	\begin{align*}
	\left\{z\frac{\partial}{\partial z}+j+\mu+1 \right\}Q^{\mu}_{k,j}(z,\bar{z})&\,=\,\frac{j!}{(\mu+1)_j}(k+\mu+1)z^{k-j}\,P_j^{(\mu+1,k-j)}(2z\bar{z}-1),\\[10pt]
	&\,=\,\frac{(k+\mu+1)\,(j+\mu+1)}{\mu+1}Q^{\mu+1}_{k,j}(z,\bar{z}).
\end{align*}

Now, we compute
\begin{align*}
	&\left\{(1-z\bar{z})\,z\frac{\partial}{\partial z}-k\,(1-z\bar{z})-\mu \right\}Q^{\mu}_{k,j}(z,\bar{z})\\[10pt]
	&\,=\,\frac{j!}{(\mu+1)_j}z^{k-j}\left[2(1-z\bar{z})z\bar{z}\left(P_j^{(\mu,k-j)} \right)'-\bigg(j\,(1-z\bar{z})+\alpha\bigg)\,P_j^{(\mu,k-j)} \right](2z\bar{z}-1).
\end{align*}
The expression in brackets is $\frac{1}{2}B_2[P_j^{(\mu,k-j)}(t)]$ under the same change of variable as before. Hence, from \eqref{ladder2}, we obtain
\begin{align*}
	&\left\{(1-z\bar{z})\,z\frac{\partial}{\partial z}-k\,(1-z\bar{z})-\mu \right\}Q^{\mu}_{k,j}(z,\bar{z})\\[10pt]
	&\,=\,-(j+\alpha)\frac{j!}{(\mu+1)_j}z^{k-j}\,P_j^{(\mu-1,k-j)}(2z\bar{z}-1)\,=\,-\mu\,Q^{\mu-1}_{k,j}(z,\bar{z}).
\end{align*}
We have now proved both equations in \eqref{ladderZ3}.
\end{proof}

\begin{corollary}
	The complex generalized Zernike polynomials satisfy
	\begin{equation}\label{ladderZ4}
		\begin{aligned}
			&\left\{\bar{z}\frac{\partial}{\partial \bar{z}}+k+\mu+1 \right\}Q^{\mu}_{k,j}(z,\bar{z})\,=\,\frac{(k+\mu+1)\,(j+\mu+1)}{\mu+1}Q^{\mu+1}_{k,j}(z,\bar{z}), & \mu> -1,\\[10pt]
			&\left\{(1-z\bar{z})\,\bar{z}\frac{\partial}{\partial \bar{z}}-j\,(1-z\bar{z})-\mu \right\}Q^{\mu}_{k,j}(z,\bar{z})\,=\,-\mu\,Q^{\mu-1}_{k,j}(z,\bar{z}), & \mu> 0.
		\end{aligned}
	\end{equation}
\end{corollary}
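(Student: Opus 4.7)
The plan is to derive \eqref{ladderZ4} directly from Theorem \eqref{ladderZ3} using the symmetry relation \eqref{invariance}, rather than redoing any Jacobi computations. Since $Q^{\mu}_{k,j}(z,\bar z)=Q^{\mu}_{j,k}(\bar z,z)$, interchanging $z\leftrightarrow \bar z$ and simultaneously $k\leftrightarrow j$ leaves the complex generalized Zernike polynomials invariant. Under this swap, the operator $z\,\partial/\partial z$ becomes $\bar z\,\partial/\partial \bar z$, the factor $(1-z\bar z)$ is fixed, and the parameter $\mu$ is untouched, so each identity in \eqref{ladderZ3} has a mirror identity that must automatically be satisfied.

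Concretely, I would first write each identity in \eqref{ladderZ3} with the pair $(k,j)$ replaced by $(j,k)$ and with arguments $(z,\bar z)$ replaced by $(\bar z,z)$. For example, the first identity of \eqref{ladderZ3} becomes
\begin{equation*}
\left\{\bar z\frac{\partial}{\partial \bar z}+k+\mu+1\right\}Q^{\mu}_{j,k}(\bar z,z)\,=\,\frac{(j+\mu+1)(k+\mu+1)}{\mu+1}\,Q^{\mu+1}_{j,k}(\bar z,z).
\end{equation*}
Then I apply \eqref{invariance} to both sides to convert every $Q^{\nu}_{j,k}(\bar z,z)$ back into $Q^{\nu}_{k,j}(z,\bar z)$, which gives the first identity of \eqref{ladderZ4}. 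Repeating the same manoeuvre on the second identity of \eqref{ladderZ3} yields the second identity of \eqref{ladderZ4}.

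The only subtle point is the chain-rule bookkeeping when swapping variables: if $\tilde Q(z,\bar z)=Q(\bar z,z)$, then $(\partial/\partial \bar z)\tilde Q(z,\bar z)$ equals the partial of $Q$ with respect to its first slot, evaluated at $(\bar z,z)$. In our setting this translates cleanly into the assertion that applying $\bar z\,\partial/\partial \bar z$ to $Q^{\mu}_{k,j}(z,\bar z)$ is the same as applying $z\,\partial/\partial z$ to the $(j,k)$-polynomial with arguments interchanged, and likewise for the operator in the second identity. Once that correspondence is spelled out, \eqref{ladderZ4} follows from \eqref{ladderZ3} without any further computation, which is why the statement is formulated as a corollary.

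The main (and essentially only) obstacle is writing out the symmetry carefully enough that the reader is convinced no hidden sign or factor appears; I would therefore devote a short opening paragraph to the substitution $(z,\bar z,k,j)\mapsto(\bar z,z,j,k)$ and its effect on each operator before quoting \eqref{ladderZ3} and concluding. If for any reason the symmetry route seemed unsatisfactory, a fall-back plan is to repeat the proof of Theorem \eqref{ladderZ3} verbatim but starting from the alternative representation \eqref{conjugation} for $Q^{\mu}_{k,j}$, so that differentiation is naturally performed with respect to $\bar z$ and the operators $B_1$, $B_2$ from \eqref{ladder2} act on $P_k^{(\mu,j-k)}(2z\bar z-1)$; this would produce \eqref{ladderZ4} independently, but duplicates effort, so I would only fall back to it as a sanity check.
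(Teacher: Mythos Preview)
Your proposal is correct and follows exactly the route the paper intends: the corollary is stated without proof precisely because it is obtained from \eqref{ladderZ3} via the invariance \eqref{invariance}, just as the earlier corollary \eqref{ladderZ2} was deduced from \eqref{ladderZ1}. Your careful handling of the substitution $(z,\bar z,k,j)\mapsto(\bar z,z,j,k)$ and the chain-rule bookkeeping is appropriate and leaves no gaps.
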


In contrast to the previous ladder operators, the follwing ones only shift the parameters $k$ and $j$, and leave $\mu$ unchanged.

\begin{theorem}
	For $\mu>-1$, the complex generalized Zernike polynomials satisfy
	\begin{equation}\label{ladderZ5}
		\begin{aligned}
			&\left\{(1-z\bar{z})\,\frac{\partial}{\partial z}+k\bar{z} \right\}Q^{\mu}_{k,j}(z,\bar{z})\,=\,k\,Q^{\mu}_{k-1,j}(z,\bar{z}),\\[10pt]
			&\left\{(1-z\bar{z})\frac{\partial}{\partial \bar{z}}-(k+\mu+1)z \right\}Q^{\mu}_{k,j}(z,\bar{z})\,=\,-(k+\mu+1)\,Q^{\mu}_{k+1,j}(z,\bar{z}).
		\end{aligned}
	\end{equation}
\end{theorem}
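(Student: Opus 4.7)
The natural strategy is to use the representation in \eqref{zernikecomplex}, namely
$$
Q^{\mu}_{k,j}(z,\bar z)\,=\,\frac{j!}{(\mu+1)_j}\,z^{k-j}\,P_j^{(\mu,k-j)}(2z\bar z-1),
$$
because both target polynomials $Q^{\mu}_{k\pm 1,j}$ keep the Jacobi degree equal to $j$ and merely shift the second parameter $\beta=k-j$ by $\pm 1$. This is exactly the effect of the operators $C_1$ and $C_2$ of \eqref{ladder3}, which shift only $\beta$ and preserve the degree. So the plan is: differentiate the above representation, assemble the combinations appearing on the left of \eqref{ladderZ5}, and recognize the resulting brackets as the univariate operators $C_1$ or $C_2$ applied to $P_j^{(\mu,k-j)}$ under the change of variable $t=2z\bar z-1$.

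For the first identity, I would compute $\partial_z Q^{\mu}_{k,j}$ (both the $z^{k-j}$ prefactor and the Jacobi polynomial, via the chain rule $\partial_z(2z\bar z-1)=2\bar z$), multiply by $(1-z\bar z)$, add $k\bar z\,Q^{\mu}_{k,j}$, and factor out $\frac{j!}{(\mu+1)_j}\,z^{k-j-1}$. The bracket that remains is
$$
2z\bar z(1-z\bar z)\,\bigl(P_j^{(\mu,k-j)}\bigr)' + \bigl[(k-j)(1-z\bar z)+k\,z\bar z\bigr]\,P_j^{(\mu,k-j)},
$$
and since $(k-j)(1-z\bar z)+k\,z\bar z=(k-j)+j\,z\bar z$, this matches $\tfrac12 C_2[P_j^{(\mu,k-j)}](t)$ after the substitutions $1-t^2=4z\bar z(1-z\bar z)$, $1+t=2z\bar z$, $n=j$, $\beta=k-j$. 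By \eqref{ladder3} with $(\alpha,\beta,n)=(\mu,k-j,j)$ the bracket equals $k\,P_j^{(\mu,k-j-1)}(2z\bar z-1)$, and reassembling the prefactors yields $k\,Q^{\mu}_{k-1,j}(z,\bar z)$.

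For the second identity I would follow the same recipe with $\partial_{\bar z}$. Now the $z^{k-j}$ factor is inert, so only the chain rule on the Jacobi polynomial contributes and I get a single term; multiplying by $(1-z\bar z)$, subtracting $(k+\mu+1)z\,Q^{\mu}_{k,j}$, and factoring out $\frac{j!}{(\mu+1)_j}\,z^{k-j+1}$ leaves the bracket
$$
2(1-z\bar z)\,\bigl(P_j^{(\mu,k-j)}\bigr)' - (k+\mu+1)\,P_j^{(\mu,k-j)},
$$
which is precisely $C_1[P_j^{(\mu,k-j)}](2z\bar z-1)$ since $n+\alpha+\beta+1=j+\mu+(k-j)+1=k+\mu+1$. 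Invoking \eqref{ladder3} produces $-(k+\mu+1)\,P_j^{(\mu,k-j+1)}(2z\bar z-1)$, and restoring the prefactor reassembles $-(k+\mu+1)\,Q^{\mu}_{k+1,j}(z,\bar z)$.

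The only real obstacle is bookkeeping: keeping track of the factor $2$ from the chain rule and verifying the algebraic identity $(k-j)(1-z\bar z)+k\,z\bar z=(k-j)+j\,z\bar z$ so that the bracket lines up exactly with $C_2$ rather than with some unrelated combination of $P'$ and $P$. Once the substitution $t=2z\bar z-1$ is in place, both identities fall out of \eqref{ladder3} with no further work.
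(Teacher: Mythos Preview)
Your proposal is correct and follows essentially the same approach as the paper: both use the representation \eqref{zernikecomplex}, reduce the two identities to the univariate operators $C_1$ and $C_2$ of \eqref{ladder3} under the substitution $t=2z\bar z-1$, and then read off the result. The only cosmetic difference is that the paper treats the $\partial_{\bar z}$ identity first and writes the coefficient in the $\partial_z$ bracket directly as $jz\bar z+k-j$, whereas you perform the simplification $(k-j)(1-z\bar z)+kz\bar z=(k-j)+jz\bar z$ explicitly.
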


\begin{proof}
From the representation \eqref{zernikecomplex}, we get
\begin{align*}
	&\left\{(1-z\bar{z})\frac{\partial}{\partial \bar{z}}-(k+\mu+1)z \right\}Q^{\mu}_{k,j}(z,\bar{z})\\[10pt]
	&\,=\,\frac{j!}{(\mu+1)_j}z^{k-j+1}\left[2(1-z\bar{z})\left(P_j^{(\mu,k-j)}\right)'-(k+\mu+1)P_j^{(\mu,k-j)} \right](2z\bar{z}-1).
\end{align*}	
Since the expression in brackets coincides with $C_1[P_j^{(\mu,k-j)}(t)]$ where $t=2r-1$ and $r\,=\,z\bar{z}$, it follows from \eqref{ladder3} that
\begin{align*}
	&\left\{(1-z\bar{z})\frac{\partial}{\partial \bar{z}}-(k+\mu+1)z \right\}Q^{\mu}_{k,j}(z,\bar{z})\\[10pt]
	&\,=\,-(k+\mu+1)\frac{j!}{(\mu+1)_j}z^{k-j+1}P_j^{(\mu,k-j+1)}(2z\bar{z}-1)\,=\,-(k+\mu+1)\,Q^{\mu}_{k+1,j}(z,\bar{z}).
\end{align*}

Similarly, we compute
\begin{align*}
	&\left\{(1-z\bar{z})\,\frac{\partial}{\partial z}+k\bar{z} \right\}Q^{\mu}_{k,j}(z,\bar{z})\\[10pt]
	&\,=\,\frac{j!}{(\mu+1)_j}z^{k-j-1}\left[2(1-z\bar{z})z\bar{z}\left(P_j^{(\mu,k-j)}\right)'+(jz\bar{z}+k-j)P_j^{(\mu,k-j)} \right](2z\bar{z}-1).
\end{align*}
Since the expression in brackets coincides with $\frac{1}{2}C_2[P_j^{(\mu,k-j)}(t)]$ under the same change of variables as before, it follows from \eqref{ladder3} that
\begin{align*}
	\left\{(1-z\bar{z})\,\frac{\partial}{\partial z}+k\bar{z} \right\}Q^{\mu}_{k,j}(z,\bar{z})&\,=\,\frac{k\,j!}{(\mu+1)_j}z^{k-j-1}P_j^{(\mu,k-j-1)}(2z\bar{z}-1)\\[10pt]
	&\,=\,k\,Q^{\mu}_{k-1,j}(z,\bar{z}).
\end{align*}
This proves \eqref{ladderZ5}. This result can also be obtained by computing with \eqref{conjugation} and using $E_2$ and $E_1$ instead of $C_1$ and $C_2$, respectively. We omit the details.
\end{proof}

\begin{corollary}
	For $\mu>-1$, the complex generalized Zernike polynomials satisfy
	\begin{equation}\label{ladderZ6}
		\begin{aligned}
		&\left\{(1-z\bar{z})\,\frac{\partial}{\partial \bar{z}}+jz \right\}Q^{\mu}_{k,j}(z,\bar{z})\,=\,j\,Q^{\mu}_{k,j-1}(z,\bar{z}),\\[10pt]
		&\left\{(1-z\bar{z})\frac{\partial}{\partial z}-(j+\mu+1)\bar{z} \right\}Q^{\mu}_{k,j}(z,\bar{z})\,=\,-(j+\mu+1)\,Q^{\mu}_{k,j+1}(z,\bar{z}).
		\end{aligned}
	\end{equation}
\end{corollary}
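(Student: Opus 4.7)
The plan is to deduce \eqref{ladderZ6} from \eqref{ladderZ5} by invoking the symmetry \eqref{invariance}, exactly in the same spirit as the earlier corollaries \eqref{ladderZ2} and \eqref{ladderZ4} were derived from their respective theorems. The key observation is that the involution $\sigma$ defined by $\sigma f(z,\bar z) := f(\bar z, z)$ satisfies $\sigma Q^{\mu}_{k,j}(z,\bar z) = Q^{\mu}_{j,k}(z,\bar z)$ by \eqref{invariance}, leaves $1-z\bar z$ fixed, swaps the monomials $z$ and $\bar z$, and intertwines the Wirtinger derivatives through $\sigma\circ \partial/\partial z = \partial/\partial \bar z \circ \sigma$ (and symmetrically for $\partial/\partial \bar z$), the latter being immediate from treating $z$ and $\bar z$ as independent variables.

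To obtain the first identity in \eqref{ladderZ6}, I would start from the first identity in \eqref{ladderZ5}, relabel the indices $k\leftrightarrow j$ so as to get
\[
\left\{(1-z\bar{z})\,\frac{\partial}{\partial z}+j\bar{z} \right\}Q^{\mu}_{j,k}(z,\bar{z})\,=\,j\,Q^{\mu}_{j-1,k}(z,\bar{z}),
\]
and then apply $\sigma$ to both sides. The right-hand side becomes $j\,Q^{\mu}_{j-1,k}(\bar z,z) = j\,Q^{\mu}_{k,j-1}(z,\bar z)$ by \eqref{invariance}; on the left, $(1-z\bar z)$ is unchanged, $\partial/\partial z$ is replaced by $\partial/\partial \bar z$ via the intertwining relation, $j\bar z$ becomes $jz$, and $Q^{\mu}_{j,k}(z,\bar z)$ becomes $Q^{\mu}_{k,j}(z,\bar z)$, producing exactly the claimed formula.

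The second identity follows from the same procedure applied to the second line of \eqref{ladderZ5}. No genuine obstacle is anticipated: the derivation is a purely formal symmetry argument, and the only point to verify carefully is the interaction of $\sigma$ with the Wirtinger derivatives, which is immediate.
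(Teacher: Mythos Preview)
Your proposal is correct and follows exactly the approach intended by the paper: the corollary is stated without proof because it is meant to be an immediate consequence of the preceding theorem \eqref{ladderZ5} together with the invariance \eqref{invariance}, precisely via the relabelling $k\leftrightarrow j$ and the involution $z\leftrightarrow\bar z$ that you describe. Your explicit handling of how $\sigma$ interacts with the Wirtinger derivatives is the only detail one needs to check, and you have it right.
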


The following ladder operators shift all three parameters by one unit. In particular, both parameters $k$ and $j$ are shifted by the same amount and, consequently, the total degree of the polynomials is shifted by two units.

\begin{theorem}
	The complex generalized Zernike polynomials satisfy
	\begin{equation}\label{ladderZ7}
		\begin{aligned}
			&\left\{z\,\frac{\partial}{\partial z}-k \right\}Q^{\mu}_{k,j}(z,\bar{z})\,=\,\frac{k\,j}{\mu+1}\,Q^{\mu+1}_{k-1,j-1}(z,\bar{z}),\\[10pt]
			&\left\{(1-z\bar{z})z\frac{\partial}{\partial z}+(j+1)(1-z\bar{z})-\mu z\bar{z} \right\}Q^{\mu}_{k,j}(z,\bar{z})\,=\,-\mu\,Q^{\mu-1}_{k+1,j+1}(z,\bar{z}).
		\end{aligned}
	\end{equation}
\end{theorem}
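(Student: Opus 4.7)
The plan is to proceed exactly as in the preceding theorems of this section: apply the operator directly to the explicit representation \eqref{zernikecomplex} of $Q^{\mu}_{k,j}$, identify the resulting bracketed coefficient as one of the univariate Jacobi ladder operators under the change of variable $t=2r-1$ with $r=z\bar{z}$, and then invoke the corresponding identity in \eqref{ladder4}. Since the two operators to be tested shift the degree of the Jacobi polynomial by $\pm 1$, the natural candidates are $D_1$ (for the lowering identity) and $D_2$ (for the raising identity).

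For the first identity, I would start from
\[
Q^{\mu}_{k,j}(z,\bar{z})\,=\,\frac{j!}{(\mu+1)_j}\,z^{k-j}\,P_j^{(\mu,k-j)}(2z\bar{z}-1)
\]
and compute $z\,\partial_z$ using the product rule, obtaining
\[
\left\{z\tfrac{\partial}{\partial z}-k\right\}Q^{\mu}_{k,j}\,=\,\frac{j!}{(\mu+1)_j}\,z^{k-j}\bigl[\,2z\bar{z}\,(P_j^{(\mu,k-j)})'-j\,P_j^{(\mu,k-j)}\bigr](2z\bar{z}-1).
\]
Under $t=2z\bar{z}-1$ one has $1+t=2z\bar{z}$, so the bracket is exactly $D_1[P_j^{(\mu,k-j)}](t)$. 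Applying the first line of \eqref{ladder4} with $n=j$, $\alpha=\mu$, $\beta=k-j$ yields $D_1[P_j^{(\mu,k-j)}]=k\,P_{j-1}^{(\mu+1,k-j)}$, and a short Pochhammer simplification using $(\mu+1)_j=(\mu+1)(\mu+2)_{j-1}$ shows that the result equals $\frac{k\,j}{\mu+1}\,Q^{\mu+1}_{k-1,j-1}(z,\bar{z})$.

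For the second identity I would proceed analogously: compute $(1-z\bar{z})\,z\,\partial_z$ on \eqref{zernikecomplex} and combine with the terms $(j+1)(1-z\bar{z})$ and $-\mu z\bar{z}$ acting by multiplication. The key is that the constant terms reshuffle nicely: the $(k-j)$ produced by differentiating $z^{k-j}$ combines with $(j+1)$ to give $(k+1)$, so the bracket becomes
\[
2z\bar{z}(1-z\bar{z})(P_j^{(\mu,k-j)})'+\bigl[(k+1)(1-z\bar{z})-\mu\,z\bar{z}\bigr]P_j^{(\mu,k-j)},
\]
which under the same substitution is exactly $\tfrac{1}{2}D_2[P_j^{(\mu,k-j)}](t)$ (using $1-t=2(1-z\bar{z})$, $1+t=2z\bar{z}$, $1-t^2=4z\bar{z}(1-z\bar{z})$, and $n+\beta+1=k+1$). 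The second line of \eqref{ladder4} then gives the $P_{j+1}^{(\mu-1,k-j)}$ term with prefactor $-2(j+1)$, and one final Pochhammer identity $(\mu)_{j+1}=\mu\,(\mu+1)_j$ rewrites the answer as $-\mu\,Q^{\mu-1}_{k+1,j+1}(z,\bar{z})$.

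The only step that requires some care, and which I expect to be the main obstacle to getting right on the first pass, is the bookkeeping of the constant term in the second identity: one must verify that the multiplicative correction $(j+1)(1-z\bar{z})-\mu z\bar{z}$ (rather than, say, a correction involving $k$) is the one that exactly recovers $\tfrac{1}{2}D_2$ with the correct parameters $n=j$, $\beta=k-j$. Everything else is a mechanical application of \eqref{ladder4} and simplification of ratios of Pochhammer symbols, entirely parallel to the proofs of Theorems leading to \eqref{ladderZ1}, \eqref{ladderZ3}, and \eqref{ladderZ5}.
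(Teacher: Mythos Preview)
Your proposal is correct and follows essentially the same approach as the paper: identify the bracketed expression as $D_1$ or $\tfrac{1}{2}D_2$ under the substitution $t=2z\bar{z}-1$ and invoke \eqref{ladder4}. The only cosmetic difference is that for the first identity the paper works from the conjugate representation \eqref{conjugation} (so $D_1$ acts on $P_k^{(\mu,j-k)}$ with $n=k$), whereas you work from \eqref{zernikecomplex} (so $D_1$ acts on $P_j^{(\mu,k-j)}$ with $n=j$); both routes are valid and yield the same constant $kj/(\mu+1)$.
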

\begin{proof}
	Observe that under the change of variable $t=2z\bar{z}-1$, we have
	\begin{align*}
		&D_1[P_{k}^{(\mu,j-k)}(t)]\,=\,\left\{2z\bar{z}\left(P_{k}^{(\mu,j-k)} \right)'-k\,P_{k}^{(\mu,j-k)}\right\}(2z\bar{z}-1),\\[10pt]
		&D_2[P_{j}^{(\mu,k-j)}(t)]\,=\,\left\{4(1-z\bar{z})z\bar{z}\left( P_{j}^{(\mu,k-j)}\right)'+2[(k+1)(1-z\bar{z})-\mu\,z\bar{z}]P_{j}^{(\mu,k-j)} \right\}(2z\bar{z}-1).
	\end{align*}
From here, we get
\begin{align*}
	\left\{z\,\frac{\partial}{\partial z}-k \right\}Q^{\mu}_{k,j}(z,\bar{z})&\,=\,\frac{k!}{(\mu+1)_k}\bar{z}^{j-k}D_1[P_{k}^{(\mu,j-k)}](2z\bar{z}-1)\\[10pt]
	&\,=\,j\frac{k!}{(\mu+1)_k}\bar{z}^{j-k}P_{k-1}^{(\mu+1,j-k)}(2z\bar{z}-1)\\[10pt]
	&\,=\,\frac{k\,j}{\mu+1}\,Q^{\mu+1}_{k-1,j-1}(z,\bar{z}),
\end{align*}
and
\begin{align*}
	&\left\{(1-z\bar{z})z\frac{\partial}{\partial z}+(j+1)(1-z\bar{z})-\mu z\bar{z} \right\}Q^{\mu}_{k,j}(z,\bar{z})\\[10pt]
	&\qquad \qquad \qquad \qquad \,=\,\frac{1}{2}\frac{j!}{(\mu+1)_j}z^{k-j}D_2[P_{j}^{(\mu,k-j)}](2z\bar{z}-1)\\[10pt]
	&\qquad \qquad \qquad \qquad  \,=\,-(j+1)\frac{j!}{(\mu+1)_j}z^{k-j}P_{j+1}^{(\mu-1,k-j)}(2z\bar{z}-1)\\[10pt]
	&\qquad \qquad \qquad \qquad \,=\,-\mu\,Q^{\mu-1}_{k+1,j+1}(z,\bar{z}),
\end{align*}
which proves \eqref{ladderZ7}. 
\end{proof}

\begin{corollary}
	For $\mu>-1$, the complex generalized Zernike polynomials satisfy
		\begin{equation}\label{ladderZ8}
		\begin{aligned}
			&\left\{\bar{z}\,\frac{\partial}{\partial \bar{z}}-j \right\}Q^{\mu}_{k,j}(z,\bar{z})\,=\,\frac{k\,j}{\mu+1}\,Q^{\mu+1}_{k-1,j-1}(z,\bar{z}),\\[10pt]
			&\left\{(1-z\bar{z})\bar{z}\frac{\partial}{\partial \bar{z}}+(k+1)(1-z\bar{z})-\mu z\bar{z} \right\}Q^{\mu}_{k,j}(z,\bar{z})\,=\,-\mu\,Q^{\mu-1}_{k+1,j+1}(z,\bar{z}).
		\end{aligned}
	\end{equation}
\end{corollary}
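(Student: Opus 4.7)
The plan is to derive \eqref{ladderZ8} from the just-proved identities \eqref{ladderZ7} of Theorem 4.5 by invoking the invariance relation \eqref{invariance}, $Q^{\mu}_{k,j}(z,\bar{z}) = Q^{\mu}_{j,k}(\bar{z},z)$, exactly as was done for the earlier companion corollaries in this section. No new computation with Jacobi polynomials is needed.

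Concretely, I would write each identity of \eqref{ladderZ7} with $(k,j)$ replaced by $(j,k)$ and with the arguments $(\bar{z},z)$ in place of $(z,\bar{z})$. Under this substitution, the operator $z\,\partial/\partial z$ becomes $\bar{z}\,\partial/\partial \bar{z}$; the quantity $z\bar{z}$ is symmetric and is preserved (as is $1-z\bar{z}$); the scalar $k$ that appears explicitly in the first operator of \eqref{ladderZ7} becomes $j$; and the coefficient $(j+1)$ in the second operator becomes $(k+1)$. Then I would apply \eqref{invariance} on both sides to rewrite every $Q^{\eta}_{a,b}(\bar{z},z)$ that appears as $Q^{\eta}_{b,a}(z,\bar{z})$. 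The left-hand sides collapse into the operators displayed in \eqref{ladderZ8}, and the right-hand sides become $\frac{kj}{\mu+1}\,Q^{\mu+1}_{k-1,j-1}(z,\bar{z})$ and $-\mu\,Q^{\mu-1}_{k+1,j+1}(z,\bar{z})$, respectively (noting that $\frac{jk}{\mu+1}=\frac{kj}{\mu+1}$), which is exactly what \eqref{ladderZ8} claims.

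Since the proof is a purely symbolic substitution built on one earlier theorem and one invariance identity, there is no serious obstacle. The only step requiring care is the bookkeeping of the constants inside the first-order differential operators, ensuring that $k$ and $j$ are swapped in the right places so that, after applying \eqref{invariance} to restore the standard ordering of variables and subscripts, the coefficients match the target statement. This is the same mechanism used for the earlier corollaries \eqref{ladderZ2}, \eqref{ladderZ4}, and \eqref{ladderZ6}, which strongly suggests the author intends precisely this short invariance argument.
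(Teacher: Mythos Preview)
Your proposal is correct and is exactly the approach intended by the paper: the corollary is stated immediately after the theorem establishing \eqref{ladderZ7} with no separate proof, relying on the invariance \eqref{invariance} just as the earlier companion corollaries \eqref{ladderZ2}, \eqref{ladderZ4}, and \eqref{ladderZ6} do.
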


\begin{center}
	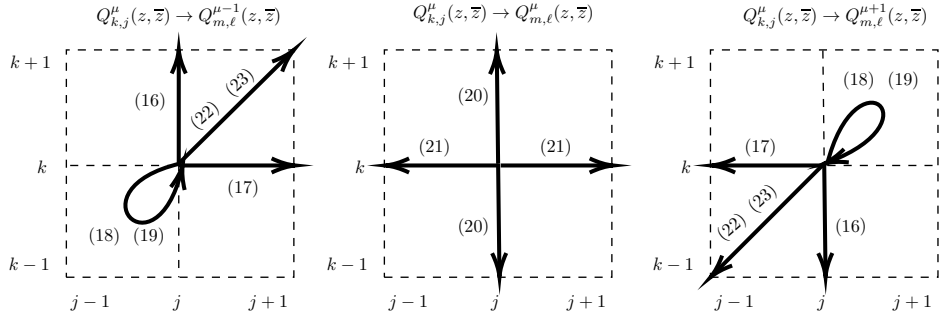
\begin{figure}[htb]
		\centering
		\resizebox{\textwidth}{!}{

			\tikzset{every picture/.style={line width=0.75pt}} 
			
			\begin{tikzpicture}[x=0.75pt,y=0.75pt,yscale=-1,xscale=1]
				
				\draw  [dash pattern={on 4.5pt off 4.5pt}] (49.17,70) -- (218,70) -- (218,238.83) -- (49.17,238.83) -- cycle ;
				\draw  [dash pattern={on 4.5pt off 4.5pt}]  (132.58,70) -- (132.58,238.83) ;
				\draw  [dash pattern={on 4.5pt off 4.5pt}]  (52,155.83) -- (219,155.83) ;
				\draw [line width=2.25]    (132.58,155.83) -- (132.58,70) ;
				\draw [shift={(132.58,70)}, rotate = 90] [color={rgb, 255:red, 0; green, 0; blue, 0 }  ][line width=2.25]    (17.49,-5.26) .. controls (11.12,-2.23) and (5.29,-0.48) .. (0,0) .. controls (5.29,0.48) and (11.12,2.23) .. (17.49,5.26)   ;
				\draw [line width=2.25]    (132.58,155.83) -- (218,155.83) ;
				\draw [shift={(219,155.83)}, rotate = 180] [color={rgb, 255:red, 0; green, 0; blue, 0 }  ][line width=2.25]    (17.49,-5.26) .. controls (11.12,-2.23) and (5.29,-0.48) .. (0,0) .. controls (5.29,0.48) and (11.12,2.23) .. (17.49,5.26)   ;
				\draw [line width=2.25]    (132.58,154.42) -- (218,70) ;
				\draw [shift={(218,70)}, rotate = 135] [color={rgb, 255:red, 0; green, 0; blue, 0 }  ][line width=2.25]    (17.49,-5.26) .. controls (11.12,-2.23) and (5.29,-0.48) .. (0,0) .. controls (5.29,0.48) and (11.12,2.23) .. (17.49,5.26)   ;
				\draw [line width=2.25]    (133.58,154.42) .. controls (52.24,173.54) and (115.05,239.8) .. (134.64,159.6) ;
				\draw [shift={(135.5,155.83)}, rotate = 102.14] [color={rgb, 255:red, 0; green, 0; blue, 0 }  ][line width=2.25]    (17.49,-5.26) .. controls (11.12,-2.23) and (5.29,-0.48) .. (0,0) .. controls (5.29,0.48) and (11.12,2.23) .. (17.49,5.26)   ;
				\draw  [dash pattern={on 4.5pt off 4.5pt}] (285.17,70) -- (454,70) -- (454,238.83) -- (285.17,238.83) -- cycle ;
				\draw  [dash pattern={on 4.5pt off 4.5pt}]  (368.58,70.42) -- (370.58,238.42) ;
				\draw  [dash pattern={on 4.5pt off 4.5pt}]  (288,155.83) -- (455,155.83) ;
				\draw [line width=2.25]    (369.58,154.42) -- (370.54,234.42) ;
				\draw [shift={(370.58,238.42)}, rotate = 269.32] [color={rgb, 255:red, 0; green, 0; blue, 0 }  ][line width=2.25]    (17.49,-5.26) .. controls (11.12,-2.23) and (5.29,-0.48) .. (0,0) .. controls (5.29,0.48) and (11.12,2.23) .. (17.49,5.26)   ;
				\draw [line width=2.25]    (369.58,154.42) -- (368.63,74.42) ;
				\draw [shift={(368.58,70.42)}, rotate = 89.32] [color={rgb, 255:red, 0; green, 0; blue, 0 }  ][line width=2.25]    (17.49,-5.26) .. controls (11.12,-2.23) and (5.29,-0.48) .. (0,0) .. controls (5.29,0.48) and (11.12,2.23) .. (17.49,5.26)   ;
				\draw [line width=2.25]    (369.5,155.83) -- (290,155.83) ;
				\draw [shift={(286,155.83)}, rotate = 360] [color={rgb, 255:red, 0; green, 0; blue, 0 }  ][line width=2.25]    (17.49,-5.26) .. controls (11.12,-2.23) and (5.29,-0.48) .. (0,0) .. controls (5.29,0.48) and (11.12,2.23) .. (17.49,5.26)   ;
				\draw [line width=2.25]    (371.5,155.83) -- (451,155.83) ;
				\draw [shift={(455,155.83)}, rotate = 180] [color={rgb, 255:red, 0; green, 0; blue, 0 }  ][line width=2.25]    (17.49,-5.26) .. controls (11.12,-2.23) and (5.29,-0.48) .. (0,0) .. controls (5.29,0.48) and (11.12,2.23) .. (17.49,5.26)   ;
				\draw  [dash pattern={on 4.5pt off 4.5pt}] (527.17,70) -- (696,70) -- (696,238.83) -- (527.17,238.83) -- cycle ;
				\draw  [dash pattern={on 4.5pt off 4.5pt}]  (610.58,70.42) -- (612.58,238.42) ;
				\draw  [dash pattern={on 4.5pt off 4.5pt}]  (530,155.83) -- (697,155.83) ;
				\draw [line width=2.25]    (611.58,154.42) -- (612.54,234.42) ;
				\draw [shift={(612.58,238.42)}, rotate = 269.32] [color={rgb, 255:red, 0; green, 0; blue, 0 }  ][line width=2.25]    (17.49,-5.26) .. controls (11.12,-2.23) and (5.29,-0.48) .. (0,0) .. controls (5.29,0.48) and (11.12,2.23) .. (17.49,5.26)   ;
				\draw [line width=2.25]    (611.58,154.42) -- (530,236) ;
				\draw [shift={(527.17,238.83)}, rotate = 315] [color={rgb, 255:red, 0; green, 0; blue, 0 }  ][line width=2.25]    (17.49,-5.26) .. controls (11.12,-2.23) and (5.29,-0.48) .. (0,0) .. controls (5.29,0.48) and (11.12,2.23) .. (17.49,5.26)   ;
				\draw [line width=2.25]    (611.5,155.83) -- (532,155.83) ;
				\draw [shift={(528,155.83)}, rotate = 360] [color={rgb, 255:red, 0; green, 0; blue, 0 }  ][line width=2.25]    (17.49,-5.26) .. controls (11.12,-2.23) and (5.29,-0.48) .. (0,0) .. controls (5.29,0.48) and (11.12,2.23) .. (17.49,5.26)   ;
				\draw [line width=2.25]    (613.5,155.83) .. controls (634.68,70.47) and (695.15,122.55) .. (615.33,153.04) ;
				\draw [shift={(611.58,154.42)}, rotate = 340.5] [color={rgb, 255:red, 0; green, 0; blue, 0 }  ][line width=2.25]    (17.49,-5.26) .. controls (11.12,-2.23) and (5.29,-0.48) .. (0,0) .. controls (5.29,0.48) and (11.12,2.23) .. (17.49,5.26)   ;
				
				\draw (70,34.4) node [anchor=north west][inner sep=0.75pt]    {$Q_{k,j}^{\mu }( z,\overline{z})\rightarrow Q_{m,\ell }^{\mu -1}( z,\overline{z})$};
				\draw (26,150.4) node [anchor=north west][inner sep=0.75pt]    {$k$};
				\draw (6,72.4) node [anchor=north west][inner sep=0.75pt]    {$k+1$};
				\draw (5,222.4) node [anchor=north west][inner sep=0.75pt]    {$k-1$};
				\draw (51,250.4) node [anchor=north west][inner sep=0.75pt]    {$j-1$};
				\draw (127,250.4) node [anchor=north west][inner sep=0.75pt]    {$j$};
				\draw (183,250.4) node [anchor=north west][inner sep=0.75pt]    {$j+1$};
				\draw (98,100.4) node [anchor=north west][inner sep=0.75pt]    {\eqref{ladderZ1}};
				\draw (167,164.4) node [anchor=north west][inner sep=0.75pt]    {\eqref{ladderZ2}};
				\draw (63.79,200.83) node [anchor=north west][inner sep=0.75pt]  [rotate=-358.36]  {\eqref{ladderZ3}};
				\draw (97.04,200.33) node [anchor=north west][inner sep=0.75pt]  [rotate=-0.27]  {\eqref{ladderZ4}};
				\draw (136.76,121.47) node [anchor=north west][inner sep=0.75pt]  [rotate=-319.17]  {\eqref{ladderZ7}};
				\draw (162.78,97.45) node [anchor=north west][inner sep=0.75pt]  [rotate=-317.79]  {\eqref{ladderZ8}};
				\draw (305,34.4) node [anchor=north west][inner sep=0.75pt]    {$Q_{k,j}^{\mu }( z,\overline{z})\rightarrow Q_{m,\ell }^{\mu }( z,\overline{z})$};
				\draw (262,150.4) node [anchor=north west][inner sep=0.75pt]    {$k$};
				\draw (242,72.4) node [anchor=north west][inner sep=0.75pt]    {$k+1$};
				\draw (241,222.4) node [anchor=north west][inner sep=0.75pt]    {$k-1$};
				\draw (287,250.4) node [anchor=north west][inner sep=0.75pt]    {$j-1$};
				\draw (363,250.4) node [anchor=north west][inner sep=0.75pt]    {$j$};
				\draw (419,250.4) node [anchor=north west][inner sep=0.75pt]    {$j+1$};
				\draw (338,96.4) node [anchor=north west][inner sep=0.75pt]    {\eqref{ladderZ5}};
				\draw (399,134.4) node [anchor=north west][inner sep=0.75pt]    {\eqref{ladderZ6}};
				\draw (337.79,193.83) node [anchor=north west][inner sep=0.75pt]  [rotate=-358.36]  {\eqref{ladderZ5}};
				\draw (309.04,134.33) node [anchor=north west][inner sep=0.75pt]  [rotate=-0.27]  {\eqref{ladderZ6}};
				\draw (550,34.4) node [anchor=north west][inner sep=0.75pt]    {$Q_{k,j}^{\mu }( z,\overline{z})\rightarrow Q_{m,\ell }^{\mu +1}( z,\overline{z})$};
				\draw (504,150.4) node [anchor=north west][inner sep=0.75pt]    {$k$};
				\draw (484,72.4) node [anchor=north west][inner sep=0.75pt]    {$k+1$};
				\draw (483,222.4) node [anchor=north west][inner sep=0.75pt]    {$k-1$};
				\draw (529,250.4) node [anchor=north west][inner sep=0.75pt]    {$j-1$};
				\draw (605,250.4) node [anchor=north west][inner sep=0.75pt]    {$j$};
				\draw (661,250.4) node [anchor=north west][inner sep=0.75pt]    {$j+1$};
				\draw (624,84.4) node [anchor=north west][inner sep=0.75pt]    {\eqref{ladderZ3}};
				\draw (617.79,193.83) node [anchor=north west][inner sep=0.75pt]  [rotate=-358.36]  {\eqref{ladderZ1}};
				\draw (551.04,134.33) node [anchor=north west][inner sep=0.75pt]  [rotate=-0.27]  {\eqref{ladderZ2}};
				\draw (657.04,84.33) node [anchor=north west][inner sep=0.75pt]  [rotate=-0.27]  {\eqref{ladderZ4}};
				\draw (527.76,204.47) node [anchor=north west][inner sep=0.75pt]  [rotate=-319.17]  {\eqref{ladderZ7}};
				\draw (550.78,183.45) node [anchor=north west][inner sep=0.75pt]  [rotate=-317.79]  {\eqref{ladderZ8}};		
				
			\end{tikzpicture}
		}
		\caption{Illustration of how the ladder operators increase or decrease the parameters in $Q_{k,j}^{\mu}(z,\bar{z})$.}
	\end{figure}
\end{center}

\section{Differential and recurrence relations for $Q^{\mu}_{k,j}(z,\bar{z})$}\label{recurrence}

We can combine the ladder operators introduced in the previous section to deduce differential and recurrence relations satisfied by the complex generalized Zernike polynomials. Some of the relations involve polynomials with different parameters.

First, we define the following differential operator
$$
L_{\mu}\,=\,2(1-z\bar{z})\frac{\partial^2}{\partial z\,\partial \bar{z}}-(\mu+1)\,\left(z\frac{\partial }{\partial z}+\bar{z}\frac{\partial}{\partial \bar{z}}\right).
$$

\begin{prop}\label{eigen}
	The complex generalized Zernike polynomials satisfy
	$$
	L_{\mu}Q^{\mu}_{k,j}(z,\bar{z})\,=\,\lambda_{k,j}^{\mu}\,Q^{\mu}_{k,j}(z,\bar{z}),
	$$
	where $\lambda_{k,j}^{\mu}=-2kj-(\mu+1)(k+j)$.
\end{prop}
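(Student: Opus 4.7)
The plan is to derive the eigenvalue identity by composing two of the ladder operators from \eqref{ladderZ1} so that the shifts in $\mu$ and in the index $k$ cancel, returning to a scalar multiple of $Q^\mu_{k,j}$, and then symmetrizing via \eqref{invariance}.

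First I would apply $\partial/\partial z$ using the first identity of \eqref{ladderZ1}, which yields $\partial_z Q^\mu_{k,j} = \tfrac{k(j+\mu+1)}{\mu+1}\, Q^{\mu+1}_{k-1,j}$. Then, since the second identity of \eqref{ladderZ1} sends $Q^\eta_{m,j}$ to $-\eta\, Q^{\eta-1}_{m+1,j}$ under the operator $(1-z\bar z)\partial_{\bar z} - \eta z$, I would specialize to $\eta = \mu+1$ and $m = k-1$ to obtain
\begin{equation*}
\bigl\{(1-z\bar z)\partial_{\bar z} - (\mu+1)z\bigr\}\, Q^{\mu+1}_{k-1,j} \,=\, -(\mu+1)\, Q^\mu_{k,j}.
\end{equation*}
Composing these two steps assembles the differential operator $(1-z\bar z)\partial_z\partial_{\bar z} - (\mu+1)\, z\partial_z$ and yields
\begin{equation*}
\bigl\{(1-z\bar z)\partial_z\partial_{\bar z} - (\mu+1)\, z\partial_z\bigr\}\, Q^\mu_{k,j} \,=\, -k(j+\mu+1)\, Q^\mu_{k,j}.
\end{equation*}

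Next, I would invoke the invariance \eqref{invariance}, namely $Q^\mu_{k,j}(z,\bar z) = Q^\mu_{j,k}(\bar z, z)$, which immediately gives the symmetric companion
\begin{equation*}
\bigl\{(1-z\bar z)\partial_z\partial_{\bar z} - (\mu+1)\, \bar z\partial_{\bar z}\bigr\}\, Q^\mu_{k,j} \,=\, -j(k+\mu+1)\, Q^\mu_{k,j}.
\end{equation*}
Adding the two identities produces exactly $L_\mu Q^\mu_{k,j}$ on the left-hand side, while the right-hand side becomes $-\bigl(k(j+\mu+1)+j(k+\mu+1)\bigr)Q^\mu_{k,j} = \bigl(-2kj-(\mu+1)(k+j)\bigr)Q^\mu_{k,j} = \lambda^\mu_{k,j}\, Q^\mu_{k,j}$, as required.

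The only real obstacle is recognizing which pair of ladder operators to compose: one needs a raising--lowering pair whose combined parameter shifts cancel and whose composition assembles into one half of $L_\mu$ after symmetrization under \eqref{invariance}. Once this combinatorial pairing is spotted, everything else is formal substitution, and no direct recourse to the explicit formula \eqref{zernikecomplex} or to the Jacobi differential equation \eqref{diffeq} is needed.
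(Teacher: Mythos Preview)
Your proof is correct and essentially identical to the paper's: the paper writes $L_\mu = \{(1-z\bar z)\partial_{\bar z}-(\mu+1)z\}\partial_z + \{(1-z\bar z)\partial_z-(\mu+1)\bar z\}\partial_{\bar z}$ and then applies \eqref{ladderZ1} and \eqref{ladderZ2}, which is exactly your composition of the two ladder identities in \eqref{ladderZ1} followed by the symmetrization via \eqref{invariance} (this symmetrization being precisely how \eqref{ladderZ2} is obtained from \eqref{ladderZ1}).
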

\begin{proof}
	Observe that we can write
	$$
	L_{\mu}\,=\,\left\{(1-z\bar{z})\frac{\partial}{\partial \bar{z}}-(\mu+1)z \right\}\frac{\partial}{\partial z}+\left\{(1-z\bar{z})\frac{\partial}{\partial z}-(\mu+1)\bar{z} \right\}\frac{\partial}{\partial \bar{z}}.
	$$
	Then, the result follows from \eqref{ladderZ1} and \eqref{ladderZ2}.
\end{proof}

Notice that the differential equation $L_{\mu}P\,=\,\lambda^{\mu}\,P$ makes sense for $\mu\,=\,-1$. In fact, its polynomial solutions can be given in terms of the complex generalized Zernike polynomials (\cite{PX09}). However, these polynomials solutions are not standard orthogonal polynomials (\cite{GM23}).

\begin{prop}
	For $k,j\geqslant 0$, define
	\begin{equation}\label{sobolev1}
			Q^{-1}_{0,0}(z,\bar{z})\,=\,1,\qquad 
			Q^{-1}_{k,j}(z,\bar{z})\,=\,(1-z\bar{z})\,Q^{1}_{k-1,j-1}(z,\bar{z}), \quad k,j\geqslant 1.
	\end{equation}
Then $\{Q^{-1}_{k,j}(z,\bar{z}) \}_{k,j\geqslant 0}$ satisfy
$$
L_{-1}Q^{-1}_{k,j}(z,\bar{z})\,=\,\lambda^{-1}_{k,j}Q^{-1}_{k,j}(z,\bar{z}),
$$
with $\lambda_{k,j}^{-1}\,=\,-2kj$.
\end{prop}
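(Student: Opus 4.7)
The plan is to treat the base case $(k,j)=(0,0)$ separately, which is immediate since $L_{-1}[1]=0=\lambda^{-1}_{0,0}\cdot 1$, and then handle $k,j\geqslant 1$ by a direct computation that reduces everything to the already-known eigenvalue equation at $\mu=1$.

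The key simplification is that for $\mu=-1$ the coefficient $\mu+1$ vanishes, so the operator collapses to
$$
L_{-1}\,=\,2(1-z\bar{z})\,\frac{\partial^2}{\partial z\,\partial \bar{z}}.
$$
Writing $Q^{-1}_{k,j}=(1-z\bar{z})\,Q^{1}_{k-1,j-1}$ and applying the product rule to $\partial_z\partial_{\bar{z}}$, I expect to get
$$
\partial_z\partial_{\bar{z}}Q^{-1}_{k,j}\,=\,-Q^{1}_{k-1,j-1}-\Bigl(z\partial_z+\bar{z}\partial_{\bar{z}}\Bigr)Q^{1}_{k-1,j-1}+(1-z\bar{z})\,\partial_z\partial_{\bar{z}}Q^{1}_{k-1,j-1}.
$$
Multiplying by $2(1-z\bar{z})$ then yields three terms; the third one contains $2(1-z\bar{z})\partial_z\partial_{\bar{z}}Q^{1}_{k-1,j-1}$, which is exactly the leading piece of $L_{1}Q^{1}_{k-1,j-1}$.

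At this point I invoke Proposition \ref{eigen} with $\mu=1$ applied to $Q^{1}_{k-1,j-1}$, which gives
$$
2(1-z\bar{z})\partial_z\partial_{\bar{z}}Q^{1}_{k-1,j-1}\,=\,2\bigl(z\partial_z+\bar{z}\partial_{\bar{z}}\bigr)Q^{1}_{k-1,j-1}+\lambda^{1}_{k-1,j-1}\,Q^{1}_{k-1,j-1},
$$
with $\lambda^{1}_{k-1,j-1}=-2(k-1)(j-1)-2(k+j-2)=-2kj+2$. Substituting this back, the two first-order gradient terms cancel (they differ by the factor $(1-z\bar{z})$ that was already pulled out), and what remains is a clean scalar multiple of $(1-z\bar{z})Q^{1}_{k-1,j-1}=Q^{-1}_{k,j}$ with coefficient $-2+(-2kj+2)=-2kj$, giving exactly $\lambda^{-1}_{k,j}=-2kj$ as required.

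There is no real obstacle beyond bookkeeping; the only thing to be careful about is that the terms produced by differentiating $(1-z\bar{z})$ combine in precisely the right way with the $(z\partial_z+\bar{z}\partial_{\bar{z}})$ piece coming from Proposition \ref{eigen}, and that the arithmetic $-2(k-1)(j-1)-2(k+j-2)=-2kj+2$ produces the shift of $2$ that exactly cancels the stray $-2(1-z\bar{z})Q^{1}_{k-1,j-1}$ generated by the product rule. An equivalent, slightly more conceptual route would be to use the factorization of $L_{\mu}$ recorded in the proof of Proposition \ref{eigen} together with the ladder operator \eqref{ladderZ7} (or \eqref{ladderZ8}), which sends $Q^{1}_{k-1,j-1}$ to a multiple of $Q^{-1}_{k,j}$ when read backwards, but the direct substitution above is the shortest path.
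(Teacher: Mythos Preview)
Your proof is correct and follows essentially the same route as the paper: both reduce $L_{-1}Q^{-1}_{k,j}$ to the eigenvalue equation $L_{1}Q^{1}_{k-1,j-1}=\lambda^{1}_{k-1,j-1}Q^{1}_{k-1,j-1}$ via the product rule, with the only difference being that the paper packages the product-rule computation as the operator identity $2(1-z\bar{z})\partial_z\partial_{\bar{z}}\bigl[(1-z\bar{z})\,\cdot\,\bigr]=(1-z\bar{z})(L_{1}-2)[\,\cdot\,]$ before invoking Proposition~\ref{eigen}, whereas you expand first and substitute afterwards. The arithmetic and the cancellation of the first-order terms are exactly as you describe.
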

\begin{proof}
	Note that
	$$
	L_{-1}\,=\,2(1-z\bar{z})\frac{\partial^2}{\partial z\,\partial \bar{z}}.
	$$
	Clearly, since $\lambda_{0,0}^{-1}\,=\,0$, we have $L_{-1}Q^{-1}_{0,0}(z,\bar{z})\,=\,\lambda^{-1}_{0,0}Q^{-1}_{0,0}(z,\bar{z})$. For $k,j\geqslant 1$, we have
	\begin{align*}
		L_{-1}Q^{-1}_{k,j}(z,\bar{z})&\,=\,2(1-z\bar{z})\frac{\partial^2}{\partial z\,\partial \bar{z}}(1-z\bar{z})Q^{1}_{k-1,j-1}(z,\bar{z})\\[10pt]
		&\,=\,(1-z\bar{z})\left\{2(1-z\bar{z})\frac{\partial^2}{\partial z\,\partial \bar{z}}-2\,\left(z\frac{\partial }{\partial z}+\bar{z}\frac{\partial}{\partial \bar{z}}\right)-2\right\}Q^{1}_{k-1,j-1}(z,\bar{z})\\[10pt]
		&\,=\,(1-z\bar{z})\left\{L_1-2\right\}Q^{1}_{k-1,j-1}(z,\bar{z}).
	\end{align*}
From Porposition \ref{eigen}, we get
\begin{align*}
	L_{-1}Q^{-1}_{k,j}(z,\bar{z})\,=\,(\lambda_{k-1,j-1}^{1}-2)Q^{-1}_{k,j}(z,\bar{z})\,=\,\lambda_{k,j}^{-1}Q^{-1}_{k,j}(z,\bar{z}),
\end{align*}
which proves the announced result.
\end{proof}

We remark that it was shown in \cite{PX09} that if $\mu\leqslant -2$ is an integer, then $L_{\mu}$ is not guaranteed to have a comples polynomial system solution (see also Remark 6.1 in \cite{GM23}).

The basic three term relations for multivariate orthogonal polynomials (\cite{DX14}) can be deduced for the complex generalized Zernike polynomials.

\begin{prop}
	The complex generalized Zernike polynomials satisfy the following three term relations
	\begin{align*}
		(k+j+\mu+1)z\,Q_{k,j}^{\mu}(z,\bar{z})&\,=\,(k+\mu+1)\,Q_{k+1,j}^{\mu}(z,\bar{z})+j\,Q_{k,j-1}^{\mu}(z,\bar{z}),\\[10pt]
		(k+j+\mu+1)\bar{z}\,Q_{k,j}^{\mu}(z,\bar{z})&\,=\,(j+\mu+1)\,Q_{k,j+1}^{\mu}(z,\bar{z})+k\,Q_{k-1,j}^{\mu}(z,\bar{z}).
\end{align*}
\end{prop}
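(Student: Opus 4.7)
The plan is to combine pairs of ladder operators from \eqref{ladderZ5} and \eqref{ladderZ6} that act on $Q^{\mu}_{k,j}(z,\bar z)$ through the same first-order differential operator, so that equating the two expressions cancels the derivative term and produces a purely multiplicative (three-term) identity.

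For the first relation, I would compute $(1-z\bar z)\partial_{\bar z}Q^{\mu}_{k,j}$ in two ways. Solving the second identity of \eqref{ladderZ5} for the derivative gives
\[
(1-z\bar z)\frac{\partial}{\partial \bar z}Q^{\mu}_{k,j}(z,\bar z)\,=\,-(k+\mu+1)\,Q^{\mu}_{k+1,j}(z,\bar z)+(k+\mu+1)\,z\,Q^{\mu}_{k,j}(z,\bar z),
\]
while the first identity of \eqref{ladderZ6} gives
\[
(1-z\bar z)\frac{\partial}{\partial \bar z}Q^{\mu}_{k,j}(z,\bar z)\,=\,j\,Q^{\mu}_{k,j-1}(z,\bar z)-j\,z\,Q^{\mu}_{k,j}(z,\bar z).
\]
Equating the two right-hand sides and collecting the $z\,Q^{\mu}_{k,j}$ terms produces the first three-term relation immediately, with the coefficient $(k+\mu+1)+j=k+j+\mu+1$ appearing on the left.

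For the second relation, I would do the analogous computation with $(1-z\bar z)\partial_{z}Q^{\mu}_{k,j}$: the first identity of \eqref{ladderZ5} gives one expression and the second identity of \eqref{ladderZ6} gives another; equating them and collecting the $\bar z\,Q^{\mu}_{k,j}$ terms yields the second three-term relation. Alternatively, the second relation follows at once from the first by applying the symmetry $Q^{\mu}_{k,j}(z,\bar z)=Q^{\mu}_{j,k}(\bar z,z)$ from \eqref{invariance}, which simultaneously swaps $k\leftrightarrow j$ and $z\leftrightarrow\bar z$.

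There is no real obstacle here; the only point requiring any care is bookkeeping of signs and indices when rearranging the two expressions for the same derivative, and checking that the resulting coefficient on the left is indeed $k+j+\mu+1$ rather than some other combination. Since both relations reduce to a one-line manipulation once the correct pair of ladder operators is identified, the whole argument fits into a few lines.
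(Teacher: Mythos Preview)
Your proposal is correct and is essentially the same argument as the paper's: both use the pair of $\partial/\partial\bar z$ operators from \eqref{ladderZ5} and \eqref{ladderZ6} (and the $\partial/\partial z$ pair for the second relation) and subtract so that the derivative term cancels. The only difference is cosmetic---you solve each ladder identity for $(1-z\bar z)\partial_{\bar z}Q^{\mu}_{k,j}$ and equate, whereas the paper writes the multiplication operator $(k+j+\mu+1)z$ directly as the difference of the two ladder operators before applying it to $Q^{\mu}_{k,j}$.
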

\begin{proof}
	The result follows from writing
	\begin{align*}
		(k+j+\mu+1)z&\,=\,\left\{(1-z\bar{z})\frac{\partial}{\partial z}+j\,\bar{z}\right\}-\left\{(1-z\bar{z})\frac{\partial}{\partial z}-(k+\mu+1)\,\bar{z}\right\},\\[10pt]
		(k+j+\mu+1)\bar{z}&\,=\,\left\{(1-z\bar{z})\frac{\partial}{\partial \bar{z}}+k\,z\right\}-\left\{(1-z\bar{z})\frac{\partial}{\partial \bar{z}}-(j+\mu+1)\,z\right\},
	\end{align*}
and the using \eqref{ladderZ5} and \eqref{ladderZ6}.
\end{proof}

We can also deduce the linear relation between families of complex generalized Zernike polynomials with parameters $\mu$ and $\mu+1$. We remark that this linear relation is of fixed length for all values of $k$ and $j$.

\begin{prop}
	The complex generalized Zernike polynomials satisfy
	$$
	(k+j+\mu+1)\,Q_{k,j}^{\mu}(z,\bar{z})\,=\,\frac{(k+\mu+1)(j+\mu+1)}{\mu+1}Q_{k,j}^{\mu+1}(z,\bar{z})-\frac{k\,j}{\mu+1}Q_{k-1,j-1}^{\mu+1}(z,\bar{z}).
	$$
\end{prop}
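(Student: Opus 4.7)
The proposition fits the same operator-difference template used to prove the three-term relations just above it. My plan is to recognize the scalar $k+j+\mu+1$ as the difference of the two first-order differential operators that appear in \eqref{ladderZ3} and \eqref{ladderZ7}, and then apply those two identities termwise.

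Concretely, the first operator in \eqref{ladderZ3} is $z\tfrac{\partial}{\partial z}+(j+\mu+1)$ and the first operator in \eqref{ladderZ7} is $z\tfrac{\partial}{\partial z}-k$. Their difference is
\begin{equation*}
\left\{z\frac{\partial}{\partial z}+(j+\mu+1)\right\}-\left\{z\frac{\partial}{\partial z}-k\right\}\,=\,k+j+\mu+1,
\end{equation*}
a pure scalar, because the differential parts cancel. Applying both operators to $Q^{\mu}_{k,j}(z,\bar{z})$ and subtracting the results \eqref{ladderZ3} and \eqref{ladderZ7} yields
\begin{equation*}
(k+j+\mu+1)\,Q^{\mu}_{k,j}(z,\bar{z})\,=\,\frac{(k+\mu+1)(j+\mu+1)}{\mu+1}\,Q^{\mu+1}_{k,j}(z,\bar{z})-\frac{k\,j}{\mu+1}\,Q^{\mu+1}_{k-1,j-1}(z,\bar{z}),
\end{equation*}
which is exactly the claimed identity.

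There is really no obstacle here: once one observes the cancellation of the $z\tfrac{\partial}{\partial z}$ parts, the proof is a one-line combination of two previously established ladder identities. The only thing worth remarking is that the symmetric pair \eqref{ladderZ4} and \eqref{ladderZ8} would give the same identity (via the $\bar{z}\tfrac{\partial}{\partial\bar{z}}$ operators), which is consistent with the invariance \eqref{invariance} $Q^{\mu}_{k,j}(z,\bar{z})=Q^{\mu}_{j,k}(\bar{z},z)$; either derivation can be chosen, and it may be worth pointing this out briefly after the computation.
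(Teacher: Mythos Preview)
Your proof is correct and follows exactly the same approach as the paper: write $k+j+\mu+1$ as the difference of the operators $\{z\tfrac{\partial}{\partial z}+j+\mu+1\}$ and $\{z\tfrac{\partial}{\partial z}-k\}$, then apply \eqref{ladderZ3} and \eqref{ladderZ7}. Your extra remark about the symmetric derivation via \eqref{ladderZ4} and \eqref{ladderZ8} is a nice addition that the paper does not include.
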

\begin{proof}
	Since
	$$
	(k+j+\mu+1)\,Q_{k,j}^{\mu}(z,\bar{z})\,=\,\left\{z\frac{\partial}{\partial z}+j+\mu+1\right\}Q_{k,j}^{\mu}(z,\bar{z})-\left\{z\frac{\partial}{\partial z}-k \right\}Q_{k,j}^{\mu}(z,\bar{z}),
	$$
	the result follows from \eqref{ladderZ3} and \eqref{ladderZ7}.
\end{proof}

Ladder operators can be combined to obtain the so-called structure relations, that is, linear relations of fixed length involving the partial derivatives of the polynomials.

\begin{prop}
	The complex generalized Zernike polynomials satisfy the following structure relations
	$$
	(k+j+\mu+1)(1-z\bar{z})\frac{\partial}{\partial z}Q_{k,j}^{\mu}(z,\bar{z})\,=\,k(j+\mu+1)\left(Q_{k-1,j}^{\mu}(z,\bar{z})-Q_{k,j+1}^{\mu}(z,\bar{z}) \right),
	$$
	and
	$$
	(k+j+\mu+1)(1-z\bar{z})\frac{\partial}{\partial \bar{z}}Q_{k,j}^{\mu}(z,\bar{z})\,=\,j(k+\mu+1)\left(Q_{k,j-1}^{\mu}(z,\bar{z})-Q_{k+1,j}^{\mu}(z,\bar{z}) \right).
	$$
\end{prop}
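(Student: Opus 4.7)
The plan is to express $(1-z\bar{z})\frac{\partial}{\partial z}Q^{\mu}_{k,j}$ in two different ways using ladder operators that have already been established, and then take a linear combination that eliminates the algebraic cross term $\bar{z}\,Q^{\mu}_{k,j}$. Then I would carry out the parallel argument (or invoke the symmetry \eqref{invariance}) to obtain the $\partial/\partial\bar z$ relation.

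Concretely, for the first relation I would rearrange the first identity of \eqref{ladderZ5} as
$$
(1-z\bar{z})\frac{\partial}{\partial z}Q^{\mu}_{k,j}(z,\bar{z})\,=\,k\,Q^{\mu}_{k-1,j}(z,\bar{z})-k\bar{z}\,Q^{\mu}_{k,j}(z,\bar{z}),
$$
and the second identity of \eqref{ladderZ6} as
$$
(1-z\bar{z})\frac{\partial}{\partial z}Q^{\mu}_{k,j}(z,\bar{z})\,=\,(j+\mu+1)\bar{z}\,Q^{\mu}_{k,j}(z,\bar{z})-(j+\mu+1)\,Q^{\mu}_{k,j+1}(z,\bar{z}).
$$
The coefficient of $\bar{z}\,Q^{\mu}_{k,j}$ has opposite signs in these two identities, so multiplying the first by $(j+\mu+1)$ and the second by $k$ and adding is forced: the cross term cancels, the left-hand side collects to $(k+j+\mu+1)(1-z\bar{z})\partial_z Q^{\mu}_{k,j}$, and the right-hand side is exactly $k(j+\mu+1)\bigl(Q^{\mu}_{k-1,j}-Q^{\mu}_{k,j+1}\bigr)$.

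For the second structure relation I would mirror the procedure, using the first identity of \eqref{ladderZ6} and the second identity of \eqref{ladderZ5} to express $(1-z\bar z)\partial_{\bar z}Q^{\mu}_{k,j}$ modulo a multiple of $z\,Q^{\mu}_{k,j}$ with opposite signs, and then form the weighted sum with weights $(k+\mu+1)$ and $j$. Alternatively, one may simply apply the invariance \eqref{invariance} $Q^{\mu}_{k,j}(z,\bar z)=Q^{\mu}_{j,k}(\bar z,z)$ to the first relation, which swaps $(z,k)\leftrightarrow(\bar z,j)$ and produces the second identity immediately.

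No step presents a real obstacle. The only observation that drives the proof is the sign discrepancy of the cross terms in the two $(1-z\bar z)\partial_z$ (respectively $(1-z\bar z)\partial_{\bar z}$) ladder identities; once noted, the weights in the linear combination are uniquely determined and the rest is bookkeeping already taken care of by Theorem-level results proved earlier in Section \ref{ladderZernike}.
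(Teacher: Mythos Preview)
Your proposal is correct and is essentially the same argument as the paper's: the paper writes the operator identity
\[
(k+j+\mu+1)(1-z\bar z)\frac{\partial}{\partial z}
= k\Bigl\{(1-z\bar z)\frac{\partial}{\partial z}-(j+\mu+1)\bar z\Bigr\}
+ (j+\mu+1)\Bigl\{(1-z\bar z)\frac{\partial}{\partial z}+k\bar z\Bigr\}
\]
and then applies \eqref{ladderZ5} and \eqref{ladderZ6}, which is exactly your linear combination with weights $k$ and $j+\mu+1$ chosen to cancel the $\bar z\,Q^{\mu}_{k,j}$ term. The second relation is handled ``similarly'' in the paper, matching either of your two suggested routes.
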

\begin{proof}
	The first structure relation follows from writing
	$$
	(k+j+\mu+1)(1-z\bar{z})\frac{\partial}{\partial z}\,=\,k\left\{(1-z\bar{z})\frac{\partial}{\partial z}-(j+\mu+1)\bar{z}\right\}+(j+\mu+1)\left\{\frac{\partial}{\partial z}+k\bar{z}\right\},
	$$
	and using \eqref{ladderZ5} and \eqref{ladderZ6}. The second structure relation follows similarly.
\end{proof}

From the structure relations and \eqref{ladderZ1}, we get the following corollary.

\begin{corollary}
	The complex generalized Zernike polynomials satisfy
	$$
	(k+j+\mu+2)(1-z\bar{z})Q_{k,j}^{\mu+1}(z,\bar{z})\,=\,(\mu+1)\left(Q_{k,j}^{\mu}(z,\bar{z})-Q_{k+1,j+1}^{\mu}(z,\bar{z}) \right).
	$$
\end{corollary}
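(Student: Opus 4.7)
The plan is to derive the identity by combining the first structure relation from the preceding proposition with the first ladder identity \eqref{ladderZ1}, exactly as suggested by the hint. The right-hand side of the claimed corollary, $(\mu+1)\bigl(Q^\mu_{k,j}-Q^\mu_{k+1,j+1}\bigr)$, already resembles the right-hand side of the first structure relation once the indices are shifted, while the factor $(1-z\bar z)Q^{\mu+1}_{k,j}$ on the left-hand side is exactly what the first identity in \eqref{ladderZ1} produces out of $(1-z\bar z)\partial_z Q^\mu_{k+1,j}$.

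Concretely, I would first apply the structure relation with the substitution $k\mapsto k+1$. This yields
\[
(k+j+\mu+2)(1-z\bar z)\frac{\partial}{\partial z}Q_{k+1,j}^{\mu}(z,\bar z)\,=\,(k+1)(j+\mu+1)\bigl(Q_{k,j}^{\mu}(z,\bar z)-Q_{k+1,j+1}^{\mu}(z,\bar z)\bigr),
\]
so the desired scalar $k+j+\mu+2$ and the desired difference already sit in the correct places. Then I would rewrite the derivative on the left using the first identity of \eqref{ladderZ1} with $k$ replaced by $k+1$, namely
\[
\frac{\partial}{\partial z}Q^{\mu}_{k+1,j}(z,\bar z)\,=\,\frac{(k+1)(j+\mu+1)}{\mu+1}Q^{\mu+1}_{k,j}(z,\bar z),
\]
and substitute this into the displayed equation. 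Cancelling the common nonzero factor $(k+1)(j+\mu+1)$ on both sides and multiplying through by $\mu+1$ gives precisely the stated identity.

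I do not foresee a genuine obstacle here: the argument is a bookkeeping exercise combining two identities already proved in the paper. The only small points to verify are that the index shift $k\mapsto k+1$ is admissible (which it is, since the structure relation holds for all $k,j\ge 0$) and that the factor $(\mu+1)$ arising in the denominator of \eqref{ladderZ1} correctly produces the coefficient $(\mu+1)$ on the right-hand side of the corollary after cancellation.
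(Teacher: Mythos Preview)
Your proposal is correct and follows exactly the route the paper indicates: it combines the first structure relation (with the index shift $k\mapsto k+1$) and the first identity in \eqref{ladderZ1} to obtain the corollary after cancelling the common factor $(k+1)(j+\mu+1)$. There is nothing to add.
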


Since the right hand side of the first equation in \eqref{ladderZ7} and the first equation in \eqref{ladderZ8} are equal, we readily get the following fundamental relation.

\begin{prop}
	The complex generalized Zernike polynomials satisfy
	$$
	\left\{z\frac{\partial}{\partial z}-\bar{z}\frac{\partial}{\partial \bar{z}} \right\}Q_{k,j}^{\mu}(z,\bar{z})\,=\,(k-j)\,Q_{k,j}^{\mu}(z,\bar{z}).
	$$
\end{prop}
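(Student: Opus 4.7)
The plan is to exploit the remark preceding the statement: the first identities in \eqref{ladderZ7} and \eqref{ladderZ8} have identical right-hand sides, so subtracting them eliminates the shifted polynomial $Q^{\mu+1}_{k-1,j-1}(z,\bar{z})$ and leaves an identity purely in terms of $Q_{k,j}^{\mu}(z,\bar{z})$ itself.

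Concretely, I would write down the two relations
\[
\left\{z\,\frac{\partial}{\partial z}-k \right\}Q^{\mu}_{k,j}(z,\bar{z})\,=\,\frac{k\,j}{\mu+1}\,Q^{\mu+1}_{k-1,j-1}(z,\bar{z})
\]
from \eqref{ladderZ7} and
\[
\left\{\bar{z}\,\frac{\partial}{\partial \bar{z}}-j \right\}Q^{\mu}_{k,j}(z,\bar{z})\,=\,\frac{k\,j}{\mu+1}\,Q^{\mu+1}_{k-1,j-1}(z,\bar{z})
\]
from \eqref{ladderZ8}, and subtract the second from the first. The right-hand sides cancel, and the left-hand side collapses to $\left(z\partial_z-\bar{z}\partial_{\bar{z}}\right)Q^{\mu}_{k,j}(z,\bar{z})-(k-j)\,Q^{\mu}_{k,j}(z,\bar{z})$, yielding the claimed identity immediately.

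There is no genuine obstacle here; the only subtlety is the edge cases $k=0$ or $j=0$, where the factor $kj/(\mu+1)$ on the right-hand sides vanishes and the statements of \eqref{ladderZ7} and \eqref{ladderZ8} still hold trivially (both sides of each become first-order Euler-type relations that one can check directly from the explicit form \eqref{zernikecomplex} using $z^{k-j}$ and $\bar{z}^{j-k}$, respectively). So the conclusion extends to all $k,j\geqslant 0$ without modification.
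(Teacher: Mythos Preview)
Your proof is correct and follows exactly the approach indicated in the paper: subtracting the first identity in \eqref{ladderZ7} from the first identity in \eqref{ladderZ8} cancels the right-hand sides and yields the result immediately. Your brief remark on the edge cases $k=0$ or $j=0$ is a reasonable extra check, though the paper does not single these out.
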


\section{A note about Sobolev orthogonality}
Univariate orthogonal polynomials with respect to a Sobolev inner product (i.e., an inner product involving the derivatives of the polynomials) have been studied extensively in the past few decades. In contrast to one variable, Sobolev orthogonal polynomials of several variables are studied only recently. We refer the interested reader to the survey \cite{MX15} for a recent presentation of the state of the art on Sobolev orthogonal polynomials. Several authors have taken an interest on Sobolev orthogonal polynomials on the unit ball of $\mathbb{R}^n$ which, clearly, include the generalized Zernike polynomials when $n=2$ (see, for instance, \cite{DPP13, DFLPP16,LX14, LPP21, MPPR23, MPR23, PPX13, Xu08, Xu06}). In this section, we illustrate the use of ladder operators in the study of Sobolev orthogonal polynomials on the disk.

\subsection{Example 1}
Consider the following Sobolev inner product
$$
(f,g)_1\,=\,\frac{\lambda}{\pi}\int_{\mathbf{D}}\frac{\partial f}{\partial z}(z,\bar{z})\,\overline{\frac{\partial g}{\partial z}(z,\bar{z})}\,dz+\frac{1}{\pi}\int_0^{2\pi}f(e^{i\theta},e^{-i\theta})\,\overline{g(e^{i\theta},e^{-i\theta})}\,d\theta, \quad \lambda>0.
$$
We remark that this inner product is a complex version of one of the Sobolev inner products studied in \cite{Xu08} given by
$$
(\tilde{f},\tilde{g})\,=\,\frac{\lambda}{\pi}\int_{\mathbf{D}}\nabla \tilde{f}(x,y) \cdot \nabla \tilde{g}(x,y)\,dxdy+\frac{1}{\pi} \int_0^{2\pi}\tilde{f}(\cos\theta,\sin\theta)\,\tilde{g}(\cos\theta,\sin \theta)\,d\theta, \quad \lambda>0,
$$
for real-valued functions $\tilde{f}(x,y)\,=\,f(z,\bar{z})$ and $\tilde{g}(x,y)\,=\,g(z,\bar{z})$ under the change of variable $z\,=\,x+iy\,=\,r\,e^{i\theta}$.

Now we use ladder operators for complex generalized Zernike polynomials to show that the polynomials defined in \eqref{sobolev1} are orthogonal with respect to $(\cdot,\cdot)_1$. 

\begin{lemma}\label{laddersobolev1}
	The polynomials defined in \eqref{sobolev1} satisfy
	$$
	\frac{\partial}{\partial z}Q_{k,j}^{-1}(z,\bar{z})\,=\,-Q_{k-1,j}^0(z,\bar{z}), \quad k,j\geqslant 1.
	$$
\end{lemma}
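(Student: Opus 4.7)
The plan is to reduce the identity to a combination of results already established in Section~\ref{recurrence}, principally the corollary
$(k+j+\mu+2)(1-z\bar z)Q^{\mu+1}_{k,j} = (\mu+1)\bigl(Q^{\mu}_{k,j}-Q^{\mu}_{k+1,j+1}\bigr)$
and the linear relation between consecutive-$\mu$ families
$(k+j+\mu+1)Q^{\mu}_{k,j} = \tfrac{(k+\mu+1)(j+\mu+1)}{\mu+1}Q^{\mu+1}_{k,j} - \tfrac{kj}{\mu+1}Q^{\mu+1}_{k-1,j-1}$.
The guiding idea is that the factor $(1-z\bar z)$ appearing in the definition \eqref{sobolev1} of $Q^{-1}_{k,j}$ is precisely of the form that lets me rewrite $Q^{-1}_{k,j}$ as an elementary linear combination of two ordinary ($\mu=0$) Zernike polynomials, after which $\partial/\partial z$ can be applied term by term using the single bivariate ladder operator \eqref{ladderZ1}.

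Concretely, I would first apply the aforementioned corollary with $\mu=0$ and indices $(k-1,j-1)$ to obtain
$$ Q^{-1}_{k,j}(z,\bar z)\,=\,\frac{1}{k+j}\bigl(Q^{0}_{k-1,j-1}(z,\bar z)-Q^{0}_{k,j}(z,\bar z)\bigr). $$
I would then differentiate both sides in $z$ and apply the first identity in \eqref{ladderZ1} with $\mu=0$ to each of the two terms on the right; this produces a two-term linear combination of $Q^{1}$-type polynomials (at indices $(k-2,j-1)$ and $(k-1,j)$) whose coefficients are simple polynomial expressions in $k$ and $j$.

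Finally, I would recognize that combination as precisely $-(k+j)Q^{0}_{k-1,j}$ by invoking the linear relation between $\mu=0$ and $\mu=1$ polynomials at parameters $(k-1,j)$, which expresses $(k+j)Q^{0}_{k-1,j}$ as exactly the same combination of $Q^{1}_{k-1,j}$ and $Q^{1}_{k-2,j-1}$ up to an overall sign. Dividing through by $k+j$ yields the claimed identity.

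The main obstacle is purely organizational: keeping three separate index shifts consistent across the corollary, the ladder operator, and the linear relation. There is no analytic difficulty, because every step is a direct substitution into an already-proved identity; the conceptual content is simply the observation that the definition \eqref{sobolev1} places $Q^{-1}_{k,j}$ inside a short decomposition into standard generalized Zernike polynomials, which is exactly what makes $\partial_z Q^{-1}_{k,j}$ computable via the bivariate ladder operator \eqref{ladderZ1}.
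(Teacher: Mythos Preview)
Your argument is correct, and each substitution checks out (including the boundary case $k=1$, where the coefficient $(k-1)j$ in front of $Q^{1}_{k-2,j-1}$ vanishes so that no undefined polynomial actually appears). However, your route is genuinely different from---and considerably longer than---the paper's.

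The paper proceeds in a single line: applying the product rule to $Q^{-1}_{k,j}=(1-z\bar z)\,Q^{1}_{k-1,j-1}$ gives
\[
\frac{\partial}{\partial z}Q^{-1}_{k,j}=\left\{(1-z\bar z)\frac{\partial}{\partial z}-\bar z\right\}Q^{1}_{k-1,j-1},
\]
and the right-hand side is \emph{exactly} the second ladder operator in \eqref{ladderZ2} with $\mu=1$, which sends $Q^{1}_{k-1,j-1}$ directly to $-Q^{0}_{k-1,j}$. No decomposition of $Q^{-1}_{k,j}$ into $\mu=0$ polynomials is needed, and only one ladder identity is invoked rather than three. Your approach first eliminates the factor $(1-z\bar z)$ via the corollary in Section~\ref{recurrence}, then differentiates, then reassembles---which is perfectly valid but effectively re-derives the second identity in \eqref{ladderZ2} from the first identity in \eqref{ladderZ1} together with the two connection relations. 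The paper's method illustrates more directly the point of this section, namely that the Sobolev structure is uncovered by a single ladder operator; your method buys nothing extra here, though it does show that the result is already implicit in the relations of Section~\ref{recurrence}.
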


\begin{proof}
	We compute,
	$$
	\frac{\partial}{\partial z}Q_{k,j}^{-1}(z,\bar{z})\,=\,\left\{(1-z\bar{z})\frac{\partial}{\partial z}-\bar{z} \right\}Q_{k-1,j-1}^1(z,\bar{z}).
	$$
	Our result follows from \eqref{ladderZ2}.
\end{proof}

\begin{prop}
	The polynomials defined in \eqref{sobolev1} constitute a mutually orthogonal polynomial system with respect to $(\cdot,\cdot)_1$. Moreover,
	$$
	(Q_{k,j}^{-1},Q_{m,\ell}^{-1})_1\,=\,\tilde{h}_{k,j}(\lambda)\,\delta_{k,m}\,\delta_{j,\ell},
	$$
	with
	$$
	\tilde{h}_{k,j}(\lambda)\,=\,\left\{ \begin{array}{ll}
		2, & k=j=0,\\[10pt]
		\lambda\,h^0_{k-1,j}, & k,j\geqslant 1,
		\end{array}\right.
	$$
	where $h^0_{k-1,j}$ is given in \eqref{norm}. 
\end{prop}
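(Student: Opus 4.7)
The plan is to compute $(Q^{-1}_{k,j},Q^{-1}_{m,\ell})_1$ directly, exploiting two key facts: first, the factor $(1-z\bar{z})$ appearing in the definition \eqref{sobolev1} of $Q^{-1}_{k,j}$ for $k,j\geqslant 1$ vanishes on the boundary $|z|=1$, so the boundary integral in $(\cdot,\cdot)_1$ contributes only when at least one of the two arguments is $Q^{-1}_{0,0}=1$; second, Lemma \ref{laddersobolev1} converts $\partial_z Q^{-1}_{k,j}$ into the standard complex generalized Zernike polynomial $-Q^0_{k-1,j}$, reducing the disk integral to an instance of the orthogonality relation \eqref{orthogonality} at parameter $\mu=0$.

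Concretely, I will split the verification into three cases. First, when $k,j\geqslant 1$ and $m,\ell\geqslant 1$ the boundary integral vanishes because both integrands are zero on $|z|=1$, and Lemma \ref{laddersobolev1} gives
$$\frac{\lambda}{\pi}\int_{\mathbf{D}}\frac{\partial Q^{-1}_{k,j}}{\partial z}\,\overline{\frac{\partial Q^{-1}_{m,\ell}}{\partial z}}\,dz\,=\,\frac{\lambda}{\pi}\int_{\mathbf{D}}Q^0_{k-1,j}\,\overline{Q^0_{m-1,\ell}}\,dz\,=\,\lambda\,\langle Q^0_{k-1,j},Q^0_{m-1,\ell}\rangle_0,$$
which by \eqref{orthogonality} (recall $b_0=1/\pi$) equals $\lambda\,h^0_{k-1,j}\,\delta_{k,m}\,\delta_{j,\ell}$. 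Second, for $(k,j)=(m,\ell)=(0,0)$ the disk integral vanishes since $\partial_z 1=0$, while the boundary integral evaluates to $\frac{1}{\pi}\int_0^{2\pi}d\theta=2$. Third, the cross case in which $(k,j)=(0,0)$ and $m,\ell\geqslant 1$ (or vice versa): the disk part vanishes because $\partial_z 1=0$, and the boundary part vanishes because $Q^{-1}_{m,\ell}(e^{i\theta},e^{-i\theta})=(1-1)\,Q^1_{m-1,\ell-1}(e^{i\theta},e^{-i\theta})=0$. Combining the three cases yields the claimed orthogonality together with the closed form for $\tilde{h}_{k,j}(\lambda)$.

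The main obstacle has effectively been discharged in Lemma \ref{laddersobolev1}, whose proof invokes the ladder operator \eqref{ladderZ2} in its $\mu=1$ instance to convert $\{(1-z\bar{z})\partial_z-\bar{z}\}Q^1_{k-1,j-1}$ into $-Q^0_{k-1,j}$. Without this lemma one would need to unpack the definition of $Q^1_{k-1,j-1}$ in terms of Jacobi polynomials and differentiate by hand, which is considerably more cumbersome. Once the lemma is in hand, the remainder of the proof is essentially a bookkeeping exercise splitting into cases and applying \eqref{orthogonality} once; this is precisely the simplification that the ladder-operator framework is meant to provide, which justifies the placement of this example as an application of the preceding machinery.
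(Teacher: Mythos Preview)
Your proof is correct and follows essentially the same route as the paper: both arguments split into the cases $(k,j)=(m,\ell)=(0,0)$, the cross case against the constant, and the case $k,j,m,\ell\geqslant 1$, using the vanishing of $(1-z\bar{z})$ on $|z|=1$ for the boundary term and Lemma~\ref{laddersobolev1} together with \eqref{orthogonality} for the disk term. Your write-up is in fact slightly more explicit in the cross case, noting that $\partial_z 1=0$ kills the disk integral there, which the paper leaves implicit.
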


\begin{proof}
	Clearly,
	$$
	(Q_{0,0}^{-1},Q_{0,0}^{-1})_1\,=\,(1,1)_1\,=\,\frac{1}{\pi}\int_0^{2\pi}d\theta\,=\,2.
	$$
	For $k,j\geqslant 1$, 
	$$
	Q_{k,j}^{-1}(e^{i\theta},e^{-i\theta})\,=\,0,
	$$
	since the factor $(1-z\bar{z})$ vanishes when $z=e^{i\theta}$, $0\leqslant \theta \leqslant 2\pi$. Therefore, 
	$$
	(Q_{k,j}^{-1},Q_{0,0}^{-1})_1\,=\,0, \quad k,j\geqslant 0.
	$$
	Using Lemma \eqref{laddersobolev1}, we get
	$$
	(Q_{k,j}^{-1},Q_{m,\ell}^{-1})_1\,=\,\frac{\lambda}{\pi}\int_{\mathbf{D}}Q_{k-1,j}^0(z,\bar{z})\,\overline{Q_{m-1,\ell}^0(z,\bar{z})}\,dz\,=\,\lambda\,h^{0}_{k-1,j}\,\delta_{k,m}\,\delta_{j,\ell},
	$$
	where the last equality follows from \eqref{orthogonality}.
\end{proof}

\subsection{Example 2}

Now consider the following Sobolev inner product
$$
(f,g)_2\,=\,\frac{1}{\pi}\int_{\mathbf{D}}\frac{\partial^2}{\partial z \partial \bar{z}}[(1-z\bar{z})f(z,\bar{z})]\,\overline{\frac{\partial^2 }{\partial z \partial \bar{z}}[(1-z\bar{z})g(z,\bar{z})] }dz.
$$
Recall that the two-dimensional Laplace operator can be represented in complex variables as
$$
\Delta\,=\,4\frac{\partial^2}{\partial z \partial \bar{z}}.
$$
Hence, for real-valued functions $\tilde{f}(x,y)\,=\,f(z,\bar{z})$ and $\tilde{g}(x,y)\,=\,g(z,\bar{z})$, this inner product is equivalent to 
$$
(\tilde{f},\tilde{g})_{\Delta}\,=\,\frac{1}{16\pi}\int_{\mathbf{D}}\Delta[(1-x^2-y^2)\tilde{f}(x,y)]\,\Delta[(1-x^2-y^2)\tilde{g}(x,y)]\,dxdy.
$$
A mutually orthogonal polynomial system with respect to $(\cdot,\cdot)_{\Delta}$ is constructed and studied in \cite{Xu06}. Here, we study a complex mutually orthogonal polynomial system relative to $(\cdot,\cdot)_2$ via ladder operators.

Define the polynomials
	\begin{align}\label{sobolev2}
		U_{0,0}(z,\bar{z})\,=\,1, \quad 		U_{k,j}(z,\bar{z})\,=\,(1-z\bar{z})Q^2_{k-1,j-1}(z,\bar{z}), \quad k,j\geqslant 1.
	\end{align}

\begin{lemma}\label{laddersobolev2}
	For $k,j\geqslant 0$, the polynomials defined in \eqref{sobolev2} satisfy
	$$
	\frac{\partial^2}{\partial z \partial \bar{z}}[(1-z\bar{z})U_{k,j}(z,\bar{z})]\,=\,c_{k,j}\,Q_{k,j}^0(z,\bar{z}),
	$$
	with
	$$
	c_{k,j}\,=\,\left\{\begin{array}{rr} -1, & k=j=0,\\ 2, & k,j\geqslant 1, \end{array}\right.
	$$
	and 
	$$
	(1-z\bar{z})\frac{\partial^2}{\partial z \partial \bar{z}}Q_{k,j}^0(z,\bar{z})\,=\,d_{k,j}U_{k,j}(z,\bar{z}),
	$$
	with
	$$
	d_{k,j}\,=\,\left\{\begin{array}{ll} 0, & k=j=0,\\[10pt] \dfrac{1}{2}k\,j\,(k+1)\,(j+1), & k,j\geqslant 1. \end{array}\right.
	$$
\end{lemma}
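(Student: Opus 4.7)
The plan is to reduce each identity to a composition of ladder operators from \eqref{ladderZ1} and \eqref{ladderZ2}, treating the corner case $k=j=0$ separately by direct computation. Since $U_{0,0}\equiv 1$ and $Q^0_{0,0}\equiv 1$, one has $\frac{\partial^2}{\partial z\,\partial\bar z}(1-z\bar z)=-1$ and $(1-z\bar z)\frac{\partial^2(1)}{\partial z\,\partial\bar z}=0$, in agreement with $c_{0,0}=-1$ and $d_{0,0}=0$.

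For the first identity with $k,j\geqslant 1$, I would expand $(1-z\bar z)\,U_{k,j}=(1-z\bar z)^2\,Q^2_{k-1,j-1}$ and peel the two derivatives off in succession, each time using one factor of $(1-z\bar z)$ to complete a lowering ladder operator. Differentiation in $\bar z$ gives
$\frac{\partial}{\partial\bar z}\bigl[(1-z\bar z)^2\,Q^2_{k-1,j-1}\bigr]=(1-z\bar z)\bigl\{(1-z\bar z)\tfrac{\partial}{\partial\bar z}-2z\bigr\}Q^2_{k-1,j-1}$,
and the bracket is exactly the lowering operator in the second identity of \eqref{ladderZ1} at $\mu=2$, so the right-hand side collapses to $-2(1-z\bar z)\,Q^1_{k,j-1}$. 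A further differentiation in $z$ yields
$-2\bigl\{(1-z\bar z)\tfrac{\partial}{\partial z}-\bar z\bigr\}Q^1_{k,j-1}$,
where the bracket is the lowering operator in the second identity of \eqref{ladderZ2} at $\mu=1$, returning $-Q^0_{k,j}$. The product $(-2)(-1)$ delivers the claimed $c_{k,j}=2$.

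For the second identity with $k,j\geqslant 1$, no auxiliary factor of $(1-z\bar z)$ appears inside the derivative, so I would simply iterate the raising ladders in the first identities of \eqref{ladderZ1} and \eqref{ladderZ2}. Applying the first identity of \eqref{ladderZ1} at $\mu=0$ gives $\partial Q^0_{k,j}/\partial z = k(j+1)\,Q^1_{k-1,j}$; then applying the first identity of \eqref{ladderZ2} at $\mu=1$ gives $\partial Q^1_{k-1,j}/\partial\bar z = \tfrac{j(k+1)}{2}\,Q^2_{k-1,j-1}$. Multiplying by $(1-z\bar z)$ and invoking the definition \eqref{sobolev2} of $U_{k,j}$ produces $\tfrac{1}{2}kj(k+1)(j+1)\,U_{k,j}$, which matches the stated $d_{k,j}$.

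The argument is essentially mechanical; the only real care is bookkeeping. One must match each bracketed expression to the appropriate ladder (the \emph{second} lowering identities of \eqref{ladderZ1}/\eqref{ladderZ2} in the first part, and the \emph{first} raising identities in the second part), consistently pair $\partial/\partial\bar z$ with the factor $z$ and $\partial/\partial z$ with $\bar z$, and track the multiplicative constants at each step. No machinery beyond \eqref{ladderZ1}, \eqref{ladderZ2}, and the definition \eqref{sobolev2} of $U_{k,j}$ is needed.
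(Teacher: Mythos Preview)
Your proposal is correct and follows essentially the same route as the paper: both treat $k=j=0$ by direct computation, obtain the first identity for $k,j\geqslant 1$ by peeling off $\partial/\partial\bar z$ and $\partial/\partial z$ from $(1-z\bar z)^2Q^2_{k-1,j-1}$ via the second (lowering) identities of \eqref{ladderZ1} and \eqref{ladderZ2} at $\mu=2$ and $\mu=1$, and obtain the second identity by iterating the first (raising) identities of \eqref{ladderZ1} and \eqref{ladderZ2} on $Q^0_{k,j}$ and then multiplying by $(1-z\bar z)$.
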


\begin{proof}
	We compute,
	$$
	\frac{\partial}{\partial \bar{z}}[(1-z\bar{z})U_{0,0}(z,\bar{z})]\,=\,-z.
	$$
	Then,
	$$
	\frac{\partial^2}{\partial z \partial \bar{z}}[(1-z\bar{z})U_{0,0}(z,\bar{z})]\,=\,-1\,=\,-Q_{0,0}^0(z,\bar{z}).
	$$
	For $k,j\geqslant 1$, we compute
	$$
	\frac{\partial}{\partial \bar{z}}[(1-z\bar{z})U_{k,j}(z,\bar{z})]\,=\,(1-z\bar{z})\left\{(1-z\bar{z})\frac{\partial}{\partial \bar{z}} -2 z\right\} Q_{k-1,j-1}^2(z,\bar{z}).
	$$
	It follows from \eqref{ladderZ1} that
	$$
	\frac{\partial}{\partial \bar{z}}[(1-z\bar{z})U_{k,j}(z,\bar{z})]\,=\,-2(1-z\bar{z}) Q_{k,j-1}^1(z,\bar{z}).
	$$
	Then,
	$$
	\frac{\partial^2}{\partial z \partial \bar{z}}[(1-z\bar{z})U_{k,j}(z,\bar{z})]\,=\,-2\left\{(1-z\bar{z})\frac{\partial}{\partial z}-\bar{z} \right\}Q_{k,j-1}^1(z,\bar{z})\,=\,2\,Q_{k,j}^0(z,\bar{z}),
	$$
	where the last equality follows from \eqref{ladderZ2}.
	
	Moreover, using \eqref{ladderZ1} and \eqref{ladderZ2}, we obtain
	\begin{align*}
	(1-z\bar{z})\frac{\partial^2}{\partial z \partial \bar{z}}Q_{k,j}^0(z,\bar{z})&\,=\,\frac{1}{2}k\,j\,(k+1)\,(j+1)(1-z\bar{z})Q_{k-1,j-1}^2(z,\bar{z})\\[10pt]
	&\,=\,\frac{1}{2}k\,j\,(k+1)\,(j+1)\,U_{k,j}(z,\bar{z}).
	\end{align*}
\end{proof}

\begin{prop}
	The polynomials defined in \eqref{sobolev2} constitute a mutually orthogonal polynomial system with respect to $(\cdot,\cdot)_2$. Moreover,
	$$
	(U_{0,0},U_{0,0})_2\,=\,1, \quad (U_{k,j},U_{m,\ell})_2\,=\,\frac{4}{k+j+1}\,\delta_{k,m}\,\delta_{j,\ell}, \quad k,j\geqslant 1.
	$$
	 
\end{prop}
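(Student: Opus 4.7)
The plan is to reduce the evaluation of $(\cdot,\cdot)_2$ directly to the standard orthogonality of the complex generalized Zernike polynomials with parameter $\mu = 0$, using Lemma \ref{laddersobolev2} as the bridge. Since the inner product involves only the quantity $\frac{\partial^2}{\partial z\,\partial \bar{z}}[(1-z\bar{z})U_{k,j}(z,\bar{z})]$, and the lemma identifies this quantity as a scalar multiple $c_{k,j}\,Q^0_{k,j}(z,\bar{z})$, the whole computation collapses onto a known orthogonality relation.

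First, I would substitute the lemma into the definition of $(\cdot,\cdot)_2$ to obtain
\begin{equation*}
(U_{k,j},U_{m,\ell})_2 \,=\, \frac{c_{k,j}\,c_{m,\ell}}{\pi}\int_{\mathbf{D}} Q^0_{k,j}(z,\bar{z})\,\overline{Q^0_{m,\ell}(z,\bar{z})}\,dz,
\end{equation*}
noting that the constants $c_{k,j}$ are real. Since $\mu = 0$ gives $w_0(z) \equiv 1$ and $b_0 = 1/\pi$, this is precisely the orthogonality integral appearing in \eqref{orthogonality}, so it evaluates to $c_{k,j}\,c_{m,\ell}\,h^0_{k,j}\,\delta_{k,m}\,\delta_{j,\ell}$. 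This already establishes the mutual orthogonality of $\{U_{k,j}\}$, including the cross terms between $U_{0,0}$ and $U_{k,j}$ with $k,j\geqslant 1$ (where the two different values of $c_{k,j}$ are irrelevant because the Kronecker deltas vanish).

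Next I would substitute the explicit values. From \eqref{norm}, $h^0_{k,j} = \frac{1}{k+j+1}\cdot \frac{k!\,j!}{(1)_k (1)_j} = \frac{1}{k+j+1}$. For $k=j=0$ this gives $c_{0,0}^2 \cdot h^0_{0,0} = 1\cdot 1 = 1$, and for $k,j\geqslant 1$ it gives $c_{k,j}^2\,h^0_{k,j} = \frac{4}{k+j+1}$, matching the announced norms.

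There is no real obstacle here; Lemma \ref{laddersobolev2} does all of the heavy lifting by converting a Sobolev-type pairing into a standard $L^2$ pairing of Zernike polynomials at parameter $\mu=0$. The only thing to be mildly careful about is to record that the bilinear form is conjugate-linear in the second slot, so that the constants $c_{k,j}$ come out multiplied rather than conjugated, and that the Pochhammer simplification $(1)_k = k!$ is used to recognize $h^0_{k,j} = (k+j+1)^{-1}$.
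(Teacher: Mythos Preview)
Your proposal is correct and follows exactly the same route as the paper: apply Lemma~\ref{laddersobolev2} to replace $\frac{\partial^2}{\partial z\,\partial \bar z}[(1-z\bar z)U_{k,j}]$ by $c_{k,j}Q^0_{k,j}$, then invoke the orthogonality \eqref{orthogonality} at $\mu=0$ and simplify $h^0_{k,j}=(k+j+1)^{-1}$. If anything, you are slightly more careful than the paper in writing $c_{k,j}c_{m,\ell}$ before collapsing via the Kronecker deltas and in recording that $b_0=1/\pi$ matches the normalization in the integral.
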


\begin{proof}
	By Lemma \eqref{laddersobolev2}, we have
	$$
	(U_{k,j},U_{m,\ell})_2\,=\,c_{k,j}^2\frac{1}{\pi}\int_{\mathbf{D}}Q_{k,j}^0(z,\bar{z})\,\overline{Q_{m,\ell}^0(z,\bar{z})}\,dz\,=\,c_{k,j}^2\,h_{k,j}^0\,\delta_{k,m}\,\delta_{j,\ell},
	$$
	where $h^0_{k,j}$ is given in \eqref{norm}. 
\end{proof}


\end{document}